\newtheorem{theorem}{Theorem}[section]
\newtheorem{proposition}[theorem]{Proposition}
\newtheorem{lemma}[theorem]{Lemma}
\newtheorem{definition}[theorem]{Definition}
\newtheorem{corollary}[theorem]{Corollary}
\begin{document}
\everymath{\displaystyle}

\title{Tame pseudofinite theories with wild pseudofinite dimensions}
\author{Alexander Van Abel}

\maketitle

\begin{abstract}
We construct two pseudofinite theories which are tame from a neostability perspective, yet have pathological fine pseudofinite dimension in all models. These theories serve as counterexamples to potential converses of results by Garcia, Macpherson and Steinhorn relating pseudofinite dimension to tameness. We demonstrate that pseudofinite cardinality in these theories is well behaved with regards to definability, and provide a novel method of proving quantifier elimination using pseudofinite cardinality.
\end{abstract}

\section{Introduction}

In their paper ``Pseudofinite structures and simplicity'' \cite{psas}, authors Dario Garcia, Dugald Macpherson and Charles Steinhorn proved a number of results relating stability-theoretic notions in an infinite ultraproduct of finite structures to conditions on the dimension operator $\delta = \delta_{fin}$, which is the ``fine pseudofinite dimension'' introduced by Hrushovski in his paper \cite{hrush}.

One way to define $\delta$ is as follows. Let $(M_i : i \in I)$ be a family of finite $L$-structures. Let $M = \prod_{i \to \mathcal{U}} M_i$ be an ultraproduct. Take two formulas $\varphi(\bar{x},\bar{y})$ and $\psi(\bar{x},\bar{y})$, and tuples $\bar{a}$ and $\bar{b}$ of the same length as $\bar{y}$. We say \[\delta(\varphi(M,\bar{a})) = \delta(\psi(M,\bar{b}))\] if there is some fixed natural number $n$ such that for almost all $i$ we have $\frac{1}{n} \leq \frac{|\varphi(M_i, \bar{a}_i)|}{|\varphi(M_i, \bar{b}_i)|} \leq n$, and we say \[\delta(\varphi(M, \bar{a})) < \delta(\psi(M,\bar{b}))\] if for every natural number $n \in \mathbb{N}$, we have $n \cdot |\varphi(M_i, \bar{a}_i)| < |\psi(M_i, \bar{b}_i)|$ for almost all $i$. In this sense, $\delta$ is comparing asymptotic growth rates of the sizes of the defined subsets in the finite structures $M_i$.

In \cite{psas}, the authors specify the following conditions on the sequence $(M_i : i \in I)$, which we call \emph{GMS conditions} in this paper. Let $\varphi(\bar{x},\bar{y})$ be a formula. Then GMS condition $(A)_\varphi$ says the following: there is no sequence of parameters $\bar{a}_1, \bar{a}_2, \ldots$ such that \[\delta\big{(}\varphi(M, \bar{a}_1)\big{)} > \delta\big{(}\varphi(M, \bar{a}_1) \wedge \varphi(M, \bar{a}_2)\big{)} > \delta\big{(} \varphi(M, \bar{a}_1) \wedge \varphi(M, \bar{a}_2) \wedge \varphi(M,\bar{a}_3)\big{)} > \ldots.\] The authors of that paper then show that if the sequence $(M_i : i \in I)$ satisfies $(A)_\varphi$ for every formula $\varphi$ (which is what it means to satisfy GMS condition (A)) then the ultraproduct $M$ has a simple and low theory \cite[Theorem 3.2.2]{psas}.

The authors demonstrate that this implication does not reverse, in the following way (\cite[Example 4.1.3]{psas}). Take $T$ to be the theory of an equivalence relation with infinitely many infinite classes. This is a stable, hence simple, theory. It is also pseudofinite: it is the theory of any ultraproduct $\prod_{i \to \mathcal{U}} M_i$ of finite equivalence structures such that both the number of equivalence classes and the size of the smallest class tend to infinity with respect to the ultrafilter. The authors construct such a sequence of structures $(M_n : n \in \omega)$ in a way that violates the GMS condition $(A)_\varphi$ for the formula $\varphi(x,y) := $ ``$\neg x E y$''. For $n \in \omega$, let $M_n$ have $n$ equivalence classes, one of size $n^i$ for $i = 1, \ldots, n$ (so that $|M_n| = \sum_{i=1}^n n^i$). For $1 \leq k \leq n$ let $c^k_n$ be an element of the $k$th largest class in $M_n$, the one of size $n^{n-(k-1)}$. Then $\big{|}\{x \in M_n : (\neg x E c^1_n) \wedge \ldots \wedge (\neg x E c^k_n\} \big{|}$ is $\sum_{i=1}^{n-k} n^i$, which is approximately $\frac{1}{n^k} |M_n|$. This implies that in the ultraproduct, we have \[\delta\big{(} \varphi(M, c^1) \wedge \ldots \wedge \varphi(M, c^k) \big{)} > \delta\big{(} \varphi(M, c^1) \wedge \ldots \wedge \varphi(M, c^k) \wedge \varphi(M, c^{k+1})\big{)}\] for each $k$, where $c^k$ is the element of the ultraproduct represented by the sequence $(c^k_1, c^k_2, c^k_3,\ldots)$. Therefore, GMS condition $(A)_\varphi$ fails, even though the theory is simple and low (superstable, even).

However, we can find a model of $T$ which is an ultraproduct of finite equivalence structures $M'_n$ where GMS condition $(A)_\varphi$ \emph{is} satisfied for every formula $\varphi$. We can take $M'_n$ to have $n$ equivalence classes, each of size $n$. Then the $\delta$ dimension is highly well-behaved -- in particular $(A)_\varphi$ is satisfied for every $\varphi$ (giving a rather roundabout proof that $T$ is a simple theory). 

This led the author of this document to the following question. Let $T$ be a pseudofinite simple and low theory (which includes the class of stable theories). Since $T$ is pseudofinite, it may be modelled by an ultraproduct of finite structures $\prod_{i \to \mathcal{U}} M_i$. As shown above, it may be the case that the sequence $(M_i : i \in I)$ does not satisfy GMS condition $(A)_\varphi$ for every $\varphi$. But will it be the case that there is \emph{some} sequence $(M_i : i \in I)$ of finite structures and ultrafilter $\mathcal{U}$ such that $\prod_{i \to \mathcal{U}} M_i \models T$ and $(M_i : i \in I)$ satisfies $(A)_\varphi$ for all $\varphi$? We can ask a similar question for supersimple $T$ and the GMS condition (SA), to be introduced in the next section, inspired by the result in \cite{psas}  that if an infinite ultraproduct of finite structures satisfies (SA) then the theory of the ultraproduct is supersimple \cite[Theorem 3.3]{psas}.

In this paper, we give negative answers to both of these questions. In the section ``Supersimple does not imply (SA)'' we exhibit a pseudofinite theory which is supersimple of $U$-rank 1 such that no infinite ultraproduct of finite structures which models $T$ will satisfy GMS condition (SA), and in the section ``Simple does not imply (SA)'', we exhibit a pseudofinite theory which is simple and low (stable, in fact) such that no ultraproduct of finite structures satisfying $T$ satisfies GMS condition (A).

The author thanks Alf Dolich, Alice Medvedev, Charles Steinhorn, Dario Garcia, Alex Kruckman, and Cameron Donnay Hill for their comments and suggestions.

\color{black}

\section{Counting pairs and dimension conditions}

The way that we formalize pseudofinite dimension $\delta$ is through the notion of \emph{pseudofinite cardinality}, which is itself formalized by what Garcia terms a \emph{counting pair}.

For a (first-order, single-sorted) language $L$, let $L^+$ be the two-sorted expansion $(L,OF)$, where $OF = (0,1,+,-,\cdot,<)$ is the language of ordered fields. Symbols from $L$ apply to the home sort, and symbols from $OF$ apply to the new second sort. Additionally, for every partitioned formula $\varphi(\bar{x},\bar{y})$ of $L$, we add a cross-sorted function $f_{\varphi(\bar{x},\bar{y})}$ which maps $|\bar{y}|$-tuples from the home sort $L$ to elements of the $OF$ sort.

Let $M_0$ be a finite $L$-structure. We extend $M_0$ to an $L^+$-structure $M_0^+$ by letting the $OF$ sort contain a copy of the ordered field $\mathbb{R}$, and for each partitioned formula $\varphi(\bar{x},\bar{y})$ and each $\bar{b} \in M_0^{|\bar{y}|}$ letting $f_{\varphi(\bar{x},\bar{y})}(\bar{b})$ be $|\varphi(M_0^{|\bar{x}|},\bar{b})| \in \mathbb{N} \subseteq \mathbb{R}$.

If $M$ is an ultraproduct of finite structures $\prod_{i \to \mathcal{U}} M_i$, then we expand $M$ to an $L^+$-structure $M^+$ by expanding each $M_i$ to $M_i^+$, and then taking $M^+$ to be the two-sorted ultraproduct $\prod_{i \to \mathcal{U}} M_i^+$. 

We denote the OF-sort of $M$ by $\mathbb{R}^\star$. The structure in this sort is an ultrapower of the ordered field of real numbers. When $X \subseteq M$ is definable, say by $\varphi(\bar{x},\bar{b})$ we let $|X|$ denote the value $f_\varphi(\bar{b}) \in \mathbb{R}^\star$. We note that this is a well-defined notation, in that if $\varphi(M^{|\bar{x}|},\bar{b}) = X = \varphi'(M^{|\bar{x}|},\bar{c})$ then this equality holds in almost all $M_i$, whence $f_\varphi(\bar{b}) = f_{\varphi'}(\bar{c})$). We denote the set of nonstandard integers $\{(r_i)_{i \to \mathcal{U}} \in \mathbb{R}^\star : r_i \in \mathbb{Z}$ for $\mathcal{U}$-almost many $i\}$ by $\mathbb{Z}^\star$, and note that for all definable sets $X$ we have $|X| \in \mathbb{Z}^\star$.

We note that the expansion $M^+$ does not depend purely on the structure $M$, but rather on the sequence of structures $M_i$ and the ultrafilter $\mathcal{U}$. That is, if $M = \prod_{i \to \mathcal{U}} M_i$ is isomorphic to $M' = \prod_{i \to \mathcal{U}'} M_i'$, it may not be the case that $M^+$ is isomorphic to $(M')^+$ -- as in the example given in the introductory section.

In this framework of pseudofinite cardinality, we can simplfy the definition of $\delta$ to $\delta(X) = \delta(Y)$ if there is some $n \in \mathbb{N}$ with $\frac{1}{n}|Y| \leq |X| \leq \frac{1}{n}|Y|$, and $\delta(X) < \delta(Y)$ if for all $n \in \mathbb{N}$ we have $n |X| < |Y|$, for definable sets $X$ and $Y$. Alternatively, $\delta(X) < \delta(Y)$ if and only if $\frac{|X|}{|Y|}$ is an infinitesimal element of $\mathbb{R}^\star$.

\begin{definition}
Let $M := \prod_{i \to \mathcal{U}} M_i$ be a pseudofinite ultraproduct, and let $M^+$ be its counting pair expansion. Let $\varphi(\bar{x},\bar{y})$ be an $L$-formula. We say that $M^+$ satifies the GMS condition $(A)_\varphi$ if there is \emph{no} sequence of $|\bar{y}|$-tuples $\bar{a}_1,\bar{a}_2,\ldots \in M$ such that $\delta(\varphi(\bar{x},\bar{a}_1) > \delta(\varphi(\bar{x},\bar{a}_1) \wedge \varphi(\bar{x},\bar{a}_2)) > \ldots$. We say that $M^+$ satisfies the GMS condition $(A)$ if it satisfies $(A)_\varphi$ for all $L$-formulas $\varphi(\bar{x},\bar{y})$.

We say $M^+$ satisfies the GMS condition (SA) if there is no sequence of definable sets $M \supseteq X_1 \supset X_2 \supset X_3 \supset \ldots$ such that $\delta(X_i) > \delta(X_{i+1})$ for each $i$.
\end{definition}

We usually abuse notation and say that $M$ satisfies (A) or (SA) rather than $M^+$, while keeping in mind that these properties depend on the particular ultrafilter and family of finite structures such that $M = \prod_{i \to \mathcal{U}} M_i$.

In \cite{psas}, the authors prove that if $M$ satisfies (A) then $Th(M)$ is simple, and if $M$ satisfies (SA) then $Th(M)$ is supersimple.

Pseudofinite cardinality inherits first-order expressible properties from finite cardinality. In particular, we will use the following fact about cardinalities of sets with constant-sized fibers, the proof of which follows directly from the finite case:

\begin{lemma}
\label{constfiber}
Let $X \subseteq M^n$ be a definable set in the pseudofinite structure $M$. Suppose there is $0 < m < n$ and a hyperreal $c \in \mathbb{R}^\star$ such that for all tuples $\bar{a}$ of length $m$, if $\{\bar{b} : (\bar{a}\bar{b}) \in X\}$ is nonempty then $|\{\bar{b} : (\bar{a}\bar{b}) \in X\}| = c$. Then $|X| = c \cdot |\{\bar{a} : \exists \bar{y} (\bar{a} \bar{y}) \in X\}|$.
\end{lemma}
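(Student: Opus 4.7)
The plan is to prove this by transferring the corresponding finite statement via Łoś's theorem, using that pseudofinite cardinality is witnessed by genuine cardinality in almost every factor. The relevant finite statement is completely routine: if $Y \subseteq N^n$ is a finite set and every nonempty fiber $\{\bar{b} : (\bar{a},\bar{b}) \in Y\}$ over an initial $m$-tuple has the same cardinality $c$, then $|Y| = c \cdot |\pi_m(Y)|$, where $\pi_m$ is projection onto the first $m$ coordinates. The goal is to phrase both the hypothesis and the conclusion as $L^+$-sentences (with parameters) so that both can pass through the ultraproduct.

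First, I would write $X$ as the solution set of a formula $\varphi(\bar{x},\bar{y};\bar{d})$ where $|\bar{x}|=m$, $|\bar{y}|=n-m$, and $\bar{d} \in M$ are parameters. Writing $\bar{d} = (\bar{d}_i)_{i \to \mathcal{U}}$ and $c = (c_i)_{i \to \mathcal{U}}$ with each $c_i \in \mathbb{R}$, I let $X_i := \varphi(M_i^m \times M_i^{n-m}, \bar{d}_i)$ and $Y_i := \{\bar{a} \in M_i^m : \exists \bar{y}\, \varphi(\bar{a},\bar{y},\bar{d}_i)\}$, so that $X = (X_i)_{i \to \mathcal{U}}$ and the projection $\pi_m(X) = (Y_i)_{i \to \mathcal{U}}$. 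Let $\psi(\bar{y};\bar{x},\bar{d})$ denote the same formula $\varphi$, but partitioned to regard $\bar{y}$ as the object variable and $(\bar{x},\bar{d})$ as the parameter variables, so that $f_\psi(\bar{a},\bar{d})$ is the pseudofinite size of the fiber over $\bar{a}$.

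The hypothesis is then equivalent to the single $L^+$-sentence (with parameters $\bar{d}$ and $c$)
\[
\forall \bar{a}\,\Big[\big(\exists \bar{y}\, \varphi(\bar{a},\bar{y},\bar{d})\big) \;\to\; f_\psi(\bar{a},\bar{d}) = c\Big].
\]
By Łoś's theorem, this holds for $\mathcal{U}$-almost all $i$ with $\bar{d}_i$ and $c_i$ in place of $\bar{d}$ and $c$. For each such $i$ the finite statement quoted above applies: every nonempty fiber of $X_i$ has cardinality $c_i$, and so $|X_i| = c_i \cdot |Y_i|$ as natural numbers. Applying Łoś once more to the $L^+$-identity $f_\varphi(\bar{d}) = c \cdot f_\chi(\bar{d})$, where $\chi$ is the formula defining the projection, yields $|X| = c \cdot |\pi_m(X)|$ in $\mathbb{R}^\star$, as required.

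There is no real obstacle here; the one point to keep in mind is the degenerate case where $X = \emptyset$, in which case the hypothesis is vacuous, both $|X|$ and $|\pi_m(X)|$ are $0$, and the equation holds for any choice of $c$. Otherwise the argument is simply a clean application of Łoś's theorem to a fact about finite cardinalities.
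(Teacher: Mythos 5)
Your proof is correct and is precisely the Łoś transfer argument the paper alludes to when it says the lemma "follows directly from the finite case"; the paper gives no further details, so your write-up is simply the intended proof made explicit. One minor notational point: the cross-sorted function you call $f_\psi(\bar{a},\bar{d})$ is, in the paper's conventions, $f_{\varphi(\bar{y};\bar{x}\bar{z})}$ for $\varphi$ repartitioned so that both $\bar{a}$ and the parameters $\bar{d}$ fill the argument slots, but your usage is unambiguous.
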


The author is generally interested in the model theory of counting pairs for models of a given theory.

Let $L, L'$ be disjoint first-order languages and let $M$ be an $L$-structure and $N$ be an $L'$ structure. The \emph{disjoint union} $M \sqcup N$ is the two-sorted structure in the language $L \cup L'$, where the first sort is the $L$-structure $M$ and the second sort is the $L'$-structure $N$, with no additional relations defined. Definable subsets in $M \sqcup N$ are Boolean combinations of sets $X \times Y$, where $X \subseteq M$ and $Y \subseteq N$ are definable in $L$ and $L'$, respectively. Loosely speaking, $M \sqcup N$ is the ``minimal''  simultaneous two-sorted expansion of $M$ and $N$. In particular, every relation strictly on $M$ definable in $M \sqcup N$ is already definable by an $L$-formula in $M$, and similarly for $N$.

Let $\varphi(\bar{x},\bar{y})$ be a partitioned $L$-formula. Let $(\psi_1(\bar{y}),\ldots,\psi_m(\bar{y}))$ be a tuple of $L$-formulas (possibly with parameters) and let $(c_1,\ldots,c_m)$ be a tuple of hyperreals. We say that $(c_1,\ldots,c_m)$ and $(\psi_1(\bar{y}),\ldots,\psi_m(\bar{y}))$ \emph{give and define the cardinalities of } $\varphi(\bar{x},\bar{y})$ if
\begin{itemize}
\item For each $\bar{b} \in M^{|\bar{y}|}$, there is an $i \in \{1,\ldots,m\}$ such that $|\varphi(M^{|\bar{y}|},\bar{b})| = c_i$, and
\item For each $i$, $\psi_i(M^{|\bar{y}|}) = \{\bar{b} : |\varphi(M^{|\bar{y}|},\bar{b})| = c_i\}$.
\end{itemize}

\begin{proposition}
\label{defunion}
Let $M = \prod_{i \to \mathcal{U}} M_i$ be a pseudofinite ultraproduct. Let $A \subseteq M$ and $B \subseteq \mathbb{R}^\star = \prod_{i \to \mathcal{U}} \mathbb{R}$. Then the structure $M^+$ is interdefinable with the disjoint union $M \sqcup  \mathbb{R}^\star$ over $A \cup B$ if and only if for every formula $\varphi(x,\bar{y})$ with $x$ a single variable, there are formulas $\psi_1(\bar{y}),\ldots,\psi_n(\bar{y})$ with parameters from $A$ and hyperreals $c_1,\ldots,c_n \in \mathbb{R}^\star$ algebraic over $B$ which give and define the cardinalities of $\varphi(x,\bar{y})$.
\end{proposition}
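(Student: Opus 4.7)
The plan is to prove both directions by reducing them to the structural fact that a subset of $M^{|\bar{y}|} \times \mathbb{R}^\star$ definable in $M \sqcup \mathbb{R}^\star$ over $A \cup B$ is a finite disjoint union of ``rectangles'' $X_i \times Y_i$, with $X_i \subseteq M^{|\bar{y}|}$ being $L(A)$-definable and $Y_i \subseteq \mathbb{R}^\star$ being $OF(B)$-definable. This separation by sorts holds because the disjoint union has no cross-sorted relations: any formula can be put in a normal form with quantifiers separated by sort, and the parameters used likewise restrict to the sort of the variables they constrain.

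For the forward direction, I will assume interdefinability, so that in particular the graph of each $f_{\varphi(x,\bar{y})}$ with $x$ a single variable is $(M \sqcup \mathbb{R}^\star)$-definable over $A \cup B$, hence a finite disjoint union $\bigsqcup_i X_i \times Y_i$ (refining to make the $X_i$ pairwise disjoint). Since $f_\varphi$ is a function, each $Y_i$ that is nonempty in the image must be a singleton $\{c_i\}$; as $\{c_i\}$ is $OF(B)$-definable and $\mathbb{R}^\star$ is a real closed field, $c_i$ is algebraic over $B$. The $L(A)$-formula $\psi_i(\bar{y})$ defining $X_i$ then gives and defines the cardinalities of $\varphi(x, \bar{y})$.

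For the backward direction, it suffices to show that every $f_\varphi$, for $\varphi(\bar{x},\bar{y})$ of any arity, is definable in $M \sqcup \mathbb{R}^\star$ over $A \cup B$, since $M \sqcup \mathbb{R}^\star$ is already a reduct of $M^+$. I will induct on $k := |\bar{x}|$. The base case $k = 1$ is immediate from the hypothesis: the graph of $f_\varphi$ is $\bigcup_i \psi_i(M^{|\bar{y}|}) \times \{c_i\}$, a finite union of rectangles. For the inductive step on $\varphi(x_1, \ldots, x_k, \bar{y})$ with $k \geq 2$, I apply the hypothesis to $\varphi$ reparsed with $x_k$ as the single variable and $(x_1, \ldots, x_{k-1}, \bar{y})$ as parameter variables, obtaining $L(A)$-formulas $\psi_i(x_1, \ldots, x_{k-1}, \bar{y})$ and constants $c_i$ algebraic over $B$ that give the fiber sizes over $M^{k-1}$. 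Partitioning $\varphi(M^k,\bar{y})$ by fiber cardinality and applying Lemma \ref{constfiber} to each piece gives
\[ |\varphi(M^k, \bar{y})| = \sum_{i=1}^n c_i \cdot |\psi_i(M^{k-1}, \bar{y})|. \]
The inductive hypothesis applied to each $\psi_i$ (now viewed as a formula with $k-1$ home-sort variables) expresses $|\psi_i(M^{k-1}, \bar{y})|$ as a step function in $\bar{y}$ with finitely many values algebraic over $B$ and $L(A)$-definable level sets; closure of the algebraic elements under the field operations combines these into the required form for $f_\varphi$.

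The main obstacle is propagating the inductive hypothesis through formulas that already carry parameters from $A$: the statement is nominally about pure $L$-formulas $\varphi$, but the $\psi_i$ produced at intermediate steps use $A$-parameters. This is handled by first observing that the stated conclusion for $L$-formulas upgrades automatically to $L(A)$-formulas (by specializing parameter variables within $\varphi$), after which the induction proceeds smoothly and yields the desired rectangle decomposition for every $f_\varphi$.
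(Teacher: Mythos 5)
Your proof is correct and takes essentially the same approach as the paper: the forward direction via the rectangle decomposition of definable sets in disjoint unions, and the backward direction via induction on $|\bar{x}|$, peeling off one home-sort variable at a time and using Lemma \ref{constfiber} to compute $|\varphi(M^k,\bar{b})| = \sum_i c_i\cdot|\psi_i(M^{k-1},\bar{b})|$. You even make explicit a point the paper leaves implicit --- that the hypothesis, nominally about $L$-formulas, upgrades to $L(A)$-formulas so the induction can be applied to the parameterized $\psi_i$ --- which is a genuine small gap-fill rather than a deviation.
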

\begin{proof}
The disjoint union $M \sqcup \mathbb{R}^\star$ is definable in any multi-sorted structure containing $M$ and $\mathbb{R}^\star$ as sorts. We show that the condition in the proposition is equivalent to $M^+$ being definable in the disjoint union.

Suppose $M^+$ is definable over $A \cup B$, and let $\varphi(x,\bar{y})$ be an $L$-formula. By assumption, the relation ``$f_{\varphi(x,\bar{y})}(\bar{b}) = c$'' as a property of $(\bar{b},c)$ is definable in $M \sqcup \mathbb{R}^\star$ over $A \cup B$. Every definable subset of $M \sqcup \mathbb{R}^\star$ is a Boolean combination of sets $X \times Y$, where $X$ is a definable subset of $M$ and $Y$ is a definable subset of $\mathbb{R}^\star$. Putting this Boolean combination into disjunctive normal form, we obtain $L(A)$-formulas $\psi_1(\bar{y}),\ldots,\psi_n(\bar{y})$ and $OF(B)$-formulas $\theta_1(t),\ldots,\theta_n(t)$ such that for all $\bar{b} \in M^{|\bar{y}|}$ and $c \in \mathbb{R}^\star$, \[f_{\varphi(x,\bar{y})}(\bar{b}) = c \mbox{ if and only if } M \sqcup \mathbb{R}^\star \models \bigvee_{i=1}^n [\psi_i(\bar{b}) \wedge \theta_i(c)].\] We may assume each $\psi_i(M^{|\bar{y}|})$ is nonempty. It then follows that each $\psi_i(\mathbb{R}^\star)$ is a singleton. Let $c_i$ be this unique element. Then for all $\bar{b} \in M^{|\bar{y}|}$, the pseudofinite cardinality $|\varphi(M,\bar{b})| = f_{\varphi(x,\bar{y})}(\bar{b})$ is $c_i$ for some $i$. Since $\psi_i$ uniquely defines $c_i$ over $B$, we obtain that $c_i$ is algebraic over $B$. If $c_i = c_j$ for some $i \neq j$, we replace $\psi_i(\bar{y})$ with $\psi_i(\bar{y}) \vee \psi_j(\bar{y})$ and remove $\psi_j, \theta_j$; in this way, we may assume the hyperreals $c_1,\ldots,c_n$ are disjoint. Then it follows that for each $i$, the set $\{\bar{b} \in M^{|\bar{y}|} : |\varphi(M,\bar{b})| = c_i\}$ is defined by $\psi_i(\bar{y})$, proving the forward direction of the proposition.

Now suppose that for every formula $\varphi(x,\bar{y})$ with $x$ a single variable, there are formulas $\psi_1(\bar{y}),\ldots,\psi_n(\bar{y})$ with parameters from $A$ and hyperreals $c_1,\ldots,c_n \in \mathbb{R}^\star$ algebraic over $B$ which give and define the cardinalities of $\varphi(x,\bar{y})$. We will show that we can remove the restriction that $x$ is a single variable, by induction on $|\bar{x}|$. The case $|\bar{x}| = 1$ is true by assumption. Suppose the statement is true of $|\bar{x}|$ and consider the partitioned formula $\varphi(w \bar{x}, \bar{y})$. Applying the single-variable case to the repartitioned formula $\varphi(w,\bar{x}\bar{y})$, we obtain formulas $\psi_1(\bar{x}\bar{y}), \ldots, \psi_m(\bar{x}\bar{y})$ and hyperreals $c_1,\ldots,c_m \in \mathbb{R}^\star$ which give and define $\varphi(w,\bar{x}\bar{y})$. Then by the inductive hypothesis, there are formulas $\theta_{i,j}(\bar{y})$ and cardinalities $c_{i,j} \in \mathbb{R}^\star$ for $1 \leq i \leq m$ and $1 \leq j \leq m_i$ which give and define $\psi_i(\bar{x},\bar{y})$.

For a tuple $\sigma = (\sigma_1,\ldots,\sigma_n) \in \prod_{i=1}^m [1,\ldots,m_i]$ let $\theta_\sigma(\bar{y})$ be the formula $\bigwedge_i \theta_{i, \sigma_i}(\bar{y})$. As the formulas $\theta_{i,1}(\bar{y}),\ldots,\theta_{i,m_i}(\bar{y})$ partition $M^{|\bar{y}|}$ for each $i$, the formulas $\theta_{\sigma}(\bar{y})$ partition $M^{|\bar{y}|}$ as $\sigma$ ranges over $\prod_{i=1}^m [1,\ldots,m_i]$, although some of the formulas may not be realized in $M$. If $M \models \theta_\sigma(\bar{b})$, then $|\varphi(M^{|\bar{x}| + 1},\bar{b})| = \sum_{i=1}^m |\{(c,\bar{a}) : M \models \varphi(c,\bar{a},\bar{b})$ and $M \models \psi_i(\bar{a})\}|$. By Lemma \ref{constfiber}, this is equal to $\sum_{i=1}^m c_i \cdot |\psi_i(M^{|\bar{x}|},\bar{b})|$, which is $c_\sigma := \sum_{i=1}^m c_i  c_{i, \sigma_i}$ as $\bar{b} \in \theta_{i,\sigma_i}(M^{|\bar{y}|})$. Identifying tuples $\sigma$ and $\sigma'$ such that $c_\sigma = c_{\sigma'}$ (and combining $\theta_\sigma$ and $\theta_{\sigma'}$ into the disjunction $\theta_\sigma \vee \theta_{\sigma'}$, and re-indexing the tuples with numbers, we obtain formulas $\theta_1(\bar{y}),\ldots,\theta_n(\bar{y})$ and hyperreals $d_1,\ldots,d_n \in \mathbb{R}^\star$ which give and define the cardinalities $\varphi(M^{|\bar{x}|+1},\bar{y})$. 

Therefore for every formula $\varphi(\bar{x},\bar{y})$, there are formulas $\psi_1(\bar{y}),\ldots,\psi_n(\bar{y})$ with parameters from $A$ and hyperreals $c_1,\ldots,c_n \in \mathbb{R}^\star$ algebraic over $B$ which give and define the cardinalities of $\varphi(\bar{x},\bar{y})$. Then the relation  ``$f_{\varphi(\bar{x},\bar{y})}(\bar{y}) = t$'' in $M^+$ is definable in $M \sqcup \mathbb{R}^\star$, by the formula $\bigvee_{i=1}^n \psi_i(\bar{y}) \wedge t = c_i$. Parameters in the formulas $\psi_i$ come from $A$, and the hyperreals $c_i$ are algebraic over $B$, proving the backwards direction of the proposition.
\end{proof}

\section{Supersimple does not imply (SA)}

In this section, we give a supersimple pseudofinite theory $T$ such that every infinite ultraproduct of finite models which satisfies $T$ fails to satisfy (SA).

Our language $L$ contains

\begin{itemize}
\item A unary predicate $U_\sigma$ for every string $\sigma \in \omega^{<\omega}$

\item A binary relation $B_{\sigma,\tau}$ for every pair of strings $\sigma,\tau \in \omega^{<\omega}$ of the same length.
\end{itemize}

Our theory $T$ contains the following axiom (a) and axiom schemata (b)-(h):

\begin{enumerate}[label=(\alph*)]
\item $U_\emptyset$ is the universe;

\item $U_\sigma \neq \emptyset$, for each string $\sigma$;

\item $U_\sigma \supset U_\tau$, for strings $\sigma$ and $\tau$ such that $\sigma$ is an initial segment of $\tau$;

\item $U_{\sigma i} \cap U_{\sigma j} = \emptyset$, for each string $\sigma$ and distinct numbers $i,j$;

\item $B_{\sigma, \tau}$ is (the graph of) a bijection from $U_{\sigma}$ to $U_{\tau}$, for strings $\sigma$ and $\tau$ of the same length;

\item $B_{\sigma,\tau}(x,y)$ if and only if $B_{\tau,\sigma}(y,x)$, for strings $\sigma$ and $\tau$ of the same length;

\item If $B_{\sigma,\tau}(x,y)$ and $B_{\tau,\rho}(y,z)$ then $B_{\sigma,\rho}(x,z)$, for strings $\sigma, \tau$ and $\rho$ of the same length;

\item If $U_{\sigma i}(x)$ and $U_\tau(y)$, then $B_{\sigma, \tau}(x,y)$ if and only if $U_{\tau i}(y)$ and $B_{\sigma i, \tau i}(x,y)$, for strings $\sigma$, $\tau$ of the same length and each number $i$.
\end{enumerate}

\begin{proposition}

$T$ has quantifier elimination.

\end{proposition}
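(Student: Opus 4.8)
The plan is to verify quantifier elimination via the standard substructure criterion. Since $L$ is purely relational, a substructure of a model of $T$ is just a subset carrying the induced relations, so it suffices to show: whenever $M_1,M_2\models T$ have a common substructure $A=\{a_1,\dots,a_k\}$, $\varphi(x,\bar a)$ is quantifier-free with $\bar a\in A$, and $M_1\models\exists x\,\varphi(x,\bar a)$, then $M_2\models\exists x\,\varphi(x,\bar a)$. Fixing a witness $b\in M_1$, I would replace $\varphi$ by the complete quantifier-free formula $\chi(x,\bar a)$ — in the finitely many predicates $U_\sigma,B_{\sigma,\tau}$ occurring in $\varphi$ (all of length $\le d$ for some $d$) together with the equations $x=a_i$ — that $b$ satisfies; then $\chi\vdash\varphi$, so it is enough to realize $\chi$ in $M_2$. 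The point of this move is that $\chi$ only records the ``depth-$\le d$ behaviour'' of $b$: its membership pattern in the $U_\sigma$ (equivalently, an initial segment $\sigma_0$ of its branch, of length $\le d$) and, for each $i$, whether and at which level $\le d$ it is $B$-linked to $a_i$; when $\chi\vdash x=a_i$ there is nothing to prove.

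Next I would establish the structural description of quantifier-free types. For $a$ in a model of $T$, axioms (c),(d) give that $\{\sigma:U_\sigma(a)\}$ is the set of initial segments of a unique \emph{branch} $\beta_a\in\omega^{\le\omega}$, and since each $U_\sigma\supseteq\bigsqcup_k U_{\sigma k}$ with every $U_{\sigma k}$ nonempty, every $U_\sigma$ is infinite. Using (f) and (g), for each $n$ the relation ``$a\approx_n a'$'', defined to hold iff $a=a'$ or $B_{\beta_a\restriction n,\beta_{a'}\restriction n}(a,a')$, is an equivalence relation on the model, and the \emph{merge level} $m(a,a'):=\min\{n:a\approx_n a'\}$ is $0$ exactly when $a=a'$. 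Axiom (h), used in both directions, shows that $\{n:a\approx_n a'\}$ is an interval $[m(a,a'),\mu]$ (possibly empty) with $\mu=\min(|\beta_a|,|\beta_{a'}|)$, that $m(a,a')<\infty$ forces $|\beta_a|=|\beta_{a'}|$ together with $\beta_a,\beta_{a'}$ agreeing on $[m(a,a'),\infty)$ and, when $1\le m(a,a')<\infty$, diverging at position $m(a,a')-1$. Finally, transitivity of $\approx_n$ yields the ultrametric inequality: among any three merge levels $m(a_i,a_j)$ the two largest are equal (with the convention $\infty$ when there is no link). In particular the bijectivity of $B_{\sigma,\tau}$ from axiom (e) means that ``$x$ is $B$-linked to $a_i$ at level $m$'' determines $x$ inside $U_{\beta_x\restriction m}$.

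Now I would realize $\chi$ in $M_2$. Read off from $\chi$ the target initial branch $\sigma_0$ (length $\le d$) and the set $S$ of $i$ for which the witness $b$ satisfies $b\approx_{m_i}a_i$ in $M_1$ with $m_i:=m(b,a_i)\le d$. If $S=\varnothing$: in $M_2$ the set $U_{\sigma_0}$ is infinite, only finitely many of its elements are $\approx_n$-linked to some $a_i$ at a level $\le d$ (at most one per pair, since $B_{\tau,\beta_{a_i}\restriction n}$ is a bijection), and $U_{\sigma_0}$ has infinitely many nonempty children, so one may choose $b'$ sufficiently generic in $U_{\sigma_0}$ to match every atom of $\chi$. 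If $S\neq\varnothing$, pick $i_0\in S$ with $m_{i_0}$ minimal and let $b'$ be the unique element of $U_{\sigma_0\restriction m_{i_0}}$ with $B_{\sigma_0\restriction m_{i_0},\,\beta_{a_{i_0}}\restriction m_{i_0}}(b',a_{i_0})$ — it exists because $B$ is a bijection and $\bar a$ has the same quantifier-free type in $M_1$ and $M_2$. Pushing axiom (h) downward from level $m_{i_0}$, and using that $\beta_{a_{i_0}}$ is the same in both models, one checks that $\beta_{b'}=\beta_b$ (they share the prefix $\sigma_0\restriction m_{i_0}$ and both agree with $\beta_{a_{i_0}}$ from level $m_{i_0}$ on), so $b'$ matches all $U_\sigma$-atoms of $\chi$; transitivity of $\approx$ together with minimality of $m_{i_0}$ and the ultrametric inequality gives $b'\approx_{m_i}a_i$ for every $i\in S$; the interval property plus the branch-divergence at position $m_i-1$ (guaranteed since $\chi$, being realized in $M_1$, is consistent) gives $b'\not\approx_n a_i$ for $n<m_i$ with $i\in S$; and one more use of the ultrametric inequality — noting that consistency of $\chi$ forces the relevant merge levels among the $a_i$, which are the same in $M_1$ and $M_2$, to exceed $d$ — gives $b'\not\approx_n a_i$ for $i\notin S$ and all $n\le d$. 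Hence $b'$ satisfies $\chi$ in $M_2$.

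The main obstacle is the second step: correctly extracting the ``tree plus ultrametric'' picture from axioms (a)--(h), in particular the interplay between the coherence axiom (h) and the transitivity axiom (g), and keeping track of the degenerate cases produced by finite branches (``leaves''). Once that picture is in place, the bijectivity of the $B_{\sigma,\tau}$ makes the amalgamating element $b'$ essentially forced, and the remaining verifications in the third step are bookkeeping; passing to the complete depth-$\le d$ formula $\chi$ is what keeps every requirement finitary and avoids having to argue separately that the full quantifier-free type of $b$ over $\bar a$ is consistent with $T$ plus the diagram of $\bar a$.
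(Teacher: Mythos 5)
Your proposal is correct, but it takes a genuinely different route from the paper's. The paper argues purely syntactically inside a single model: it reduces $\exists x\,\Phi(x,\bar y)$, for $\Phi$ a conjunction of literals, to the normal form $U(x)\wedge B(x,\bar y)$ and then simply writes down the explicit quantifier-free condition on $\bar b$ (pairwise $B$-links $B_{\rho_i,\rho_j}(b_i,b_j)$ among the positively linked parameters, absence of links $\neg B_{\rho_i,\theta_j}$ to the negatively linked ones, and avoidance of the excluded classes $U_{\rho_i\tau_j}$) that is equivalent to solvability. You instead verify the substructure criterion, first extracting the semantic picture -- branches $\beta_a$, the coherent system of bijections, merge levels with the interval property and the ultrametric inequality -- and then amalgamating a witness, split into the generic case ($S=\emptyset$, using that every $U_\sigma$ is infinite and has infinitely many nonempty, pairwise disjoint children) and the case where the witness is forced by bijectivity ($S\neq\emptyset$). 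Both proofs ultimately rest on the same structural facts; the paper leaves them implicit in its asserted equivalence, while you make them explicit. What the paper's computation buys is brevity and an explicit quantifier-free equivalent; what yours buys is that the branch/merge-level description of quantifier-free types falls out for free, which is exactly what the paper uses immediately afterwards (the sets $U(p)$ and $B(p)$ and the unique non-algebraic extension). Two small repairs for a full write-up, neither affecting the argument: the class you link over should be $U_{\beta_b\upharpoonright m_{i_0}}$ rather than $U_{\sigma_0\upharpoonright m_{i_0}}$, since the longest positive $U$-atom recorded in $\chi$ may be shorter than $m_{i_0}$; and the ``target branch'' read off from $\chi$ must also absorb the degenerate diagonal atoms $B_{\sigma,\sigma}(x,x)$, which by your own analysis (transitivity plus symmetry force $B_{\sigma,\sigma}$ to be the identity on $U_\sigma$) are equivalent to $U_\sigma(x)$.
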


\begin{proof}

It suffices to show that the formula $\exists \Phi x (x,\bar{y})$ is equivalent to a quantifier-free formula $\pi(\bar{y})$, where $\Phi$ is a conjunction of literals (atomic formulas and their negations). We may assume that each literal contains the variable $x$, and we may also assume that each variable $y_i$ in $\bar{y}$ appears exactly once, as the general case will follow this (for example, if $\exists x B_{0,1}(x,y_1) \wedge B_{0,2}(x,y_2)$ is equivalent to $\pi(y_1,y_2)$, then $\exists x B_{0,1}(x,y) \wedge B_{0,2}(x,y)$ is equivalent to $\pi(y,y)$). By the axioms of $T$, such a quantifier-free formula is equivalent up to re-arrangement of variables to $U(x) \wedge B(x,\bar{y})$, where $U(x)$ is \[U_\sigma(x) \wedge \neg U_{\sigma \tau_1}(x) \wedge \ldots \wedge \neg U_{\sigma \tau_n}(x),\] and $B(x,\bar{y})$ is \[B_{\sigma, \rho_1}(x,y_1) \wedge \ldots \wedge B_{\sigma, \rho_n}(x,y_m) \wedge \neg B_{\sigma, \theta_1}(x,y_{m+1}) \wedge \ldots \wedge \neg B_{\sigma, \theta_k}(x,y_{m+k}),\] where $\sigma, \tau_i, \rho_i$ and $\theta_i$ are strings, with $\sigma, \rho_i$ and $\theta_i$ all of the same length (and possibly with $n, m$ or $k$ equal 0, in which case the associated conjunction is empty). Then for a tuple $\bar{b} = (b_1,\ldots,b_{m+k})$ of elements, not necessarily distinct, we have $M \models \exists x [U(x) \wedge B(x,\bar{b})]$ if and only if the following three sets of conditions hold of the tuple $\bar{b}$:

\begin{itemize}
\item $B_{\rho_i, \rho_j}(b_i, b_j)$ for $i,j$ distinct numbers in $\{1,\ldots,m\}$

\item $\neg B_{\rho_i, \theta_j}(b_i,b_{m+j})$ for $i$ in $\{1,\ldots,m\}$ and $j$ in $\{1,\ldots,k\}$, and

\item $\neg U_{\rho_i \tau_j}(b_j)$ for $i$ in $\{1,\ldots,n\}$ and $j$ in $\{1,\ldots,m\}$.
\end{itemize}

These conditions are all together given by a single quantifier-free formula in $y_1,\ldots,y_{m+k}$, proving quantifier elimination.

\end{proof}

Let $M$ be a model of $T$, and $A \subset M$. By quantifier elimination, every type $p(x) \in S_1(A)$ is determined by the sets \[U(p) = \{\sigma \in \omega^{<\omega} : U_\sigma(x) \in p\},\] and \[B(p) = \{(a,\tau) \in M \times \omega^{<\omega} : B_{\sigma', \tau}(x,a) \in p \mbox{ for some } \sigma' \in U(p) \}.\] $p$ is an algebraic type if and only if $B(p)$ is nonempty. It follows that a non-algebraic type is determined by its restriction in $S(\emptyset$).

\begin{proposition}
\label{weakmin}
Let $M$ be a model of $T$. Let $A \subseteq B \subseteq M$ and let $p \in S(A)$ be a non-algebraic type. Then $p$ has a unique extension to a non-algebraic type $q \in S(B)$.
\end{proposition}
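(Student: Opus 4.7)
The plan is to leverage the characterization of non-algebraic $1$-types given just before the proposition: since $p$ is algebraic iff $B(p) \neq \emptyset$, any non-algebraic type is uniquely determined by its set $U(p)$ alone. Both halves of the proposition will then reduce to bookkeeping with the $U$- and $B$-sets.

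For uniqueness, suppose $q \in S(B)$ is any non-algebraic extension of $p$. Non-algebraicity forces $B(q) = \emptyset$, so $\neg B_{\sigma',\tau}(x,b) \in q$ for every $b \in B$ and every pair of equal-length strings $\sigma',\tau$. On the unary side, completeness of $p$ over $A$ gives: if $\sigma \in U(p)$ then $U_\sigma(x) \in p \subseteq q$, and if $\sigma \notin U(p)$ then $\neg U_\sigma(x) \in p \subseteq q$; hence $U(q) = U(p)$. Since a $1$-type is determined by the pair $(U(\cdot), B(\cdot))$, $q$ is uniquely determined.

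For existence, I will show consistency of the partial type
\[r(x) \;:=\; p(x) \;\cup\; \{\neg B_{\sigma',\tau}(x,b) : b \in B,\ \sigma',\tau \in \omega^{<\omega},\ |\sigma'| = |\tau|\}\]
and then take $q$ to be any completion of $r$ to an element of $S(B)$. By compactness this reduces to finite subsets: a finite $r_0 \subseteq r$ involves only finitely many parameters $b_1,\ldots,b_k \in B$ and finitely many excluded formulas $B_{\sigma'_j,\tau_j}(x,b_j)$. By axiom (e) each relation $B_{\sigma'_j,\tau_j}$ is the graph of a bijection, so each excluded formula has at most one $x$-solution. Because $p$ is non-algebraic, it has infinitely many realizations in a sufficiently saturated elementary extension, and deleting the finitely many singleton exclusions still leaves a realization of $r_0$. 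Any completion $q \in S(B)$ of $r$ then extends $p$, satisfies $B(q) = \emptyset$ by construction, and (by the uniqueness argument) has $U(q) = U(p)$, so $q$ is the desired non-algebraic extension.

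The proof has no real obstacle: the entire argument rests on axiom (e) telling us that the $B_{\sigma',\tau}$ are graphs of bijections, so each formula $B_{\sigma',\tau}(x,b)$ cuts out at most a single point in $x$. Once this is noted, non-algebraicity of $p$ supplies infinitely many realizations from which to remove a finite set, and compactness finishes the job.
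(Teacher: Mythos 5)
Your proof is correct and takes essentially the same approach as the paper: the key observation in both is that, by quantifier elimination, a $1$-type is determined by its atomic diagram, and non-algebraicity forces all $\neg B_{\sigma,\tau}(x,b)$ to be in the type, from which uniqueness and existence both follow. The paper compresses this into two sentences; you make the compactness argument for existence (using that each $B_{\sigma,\tau}(x,b)$ cuts out at most a point) explicit, but the underlying idea is the same.
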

\begin{proof}
By quantifier-elimination, a type $p \in S(A)$ is determined by which atomic formulas or their negations are in it. $p$ is non-algebraic if and only if the formula ``$\neg B_{\sigma, \tau}(x,a)$'' is in $p$ for every $a \in A$ and every pair of strings $\sigma, \tau$ of the same length. 
\end{proof}

\begin{corollary}
$T$ is superstable of $U$-rank 1.
\end{corollary}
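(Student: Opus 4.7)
The plan is to first establish stability (in fact superstability) of $T$ by counting $1$-types, and then use Proposition~\ref{weakmin} to compute the $U$-rank of every $1$-type.

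For stability I would use the description of $1$-types by $(U(p), B(p))$ given just before Proposition~\ref{weakmin}. A non-algebraic type is determined by $U(p)$, hence by its restriction to $\emptyset$, of which there are at most $2^{\aleph_0}$ (one for each branch through $\omega^{\le \omega}$); algebraic $1$-types over $A$ correspond to elements of $\mathrm{acl}(A)$ and number at most $|A|+\aleph_0$. So $|S_1(A)| \le |A| + 2^{\aleph_0}$, giving stability in every $\lambda \ge 2^{\aleph_0}$, which by Shelah's stability spectrum theorem is equivalent to superstability.

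For the rank bound, let $p \in S_1(A)$ be non-algebraic and $B \supseteq A$. By Proposition~\ref{weakmin} there is a unique non-algebraic extension $p^* \in S_1(B)$; any other extension $q$ is algebraic, hence of the form $x = c$ for some element $c$. I would verify from the $(U(p), B(p))$-description that such $c$ must lie outside $\mathrm{acl}(A)$ (otherwise $p$ itself would already have been algebraic), so $c$ has infinitely many $A$-conjugates and the formula $x = c$ divides, and hence forks, over $A$. Thus $p^*$ is the unique non-forking extension of $p$ to $B$, and every forking extension of $p$ is algebraic with $U$-rank $0$, yielding $U(p) \le 1$. For the matching lower bound, I would realize $p$ by an element $a$ in the monster and consider $\mathrm{tp}(a/Aa)$, which is algebraic and forks over $A$ by the same argument, so $U(p) \ge 1$ and therefore $U(p) = 1$.

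The main technical step is verifying that every algebraic extension of a non-algebraic type genuinely forks, which reduces to checking that the witnessing element lies outside $\mathrm{acl}(A)$; this is transparent from the type description preceding Proposition~\ref{weakmin}. Everything else is then a direct consequence of that proposition together with the type count.
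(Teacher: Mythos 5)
Your proof is correct and follows essentially the same route as the paper: superstability via the type count $|S_1(A)| \le |A| + 2^{\aleph_0}$ coming from the $(U(p), B(p))$-description, and $U$-rank $1$ from Proposition~\ref{weakmin}'s uniqueness of non-algebraic extensions. You supply more detail than the paper's one-line argument, in particular the explicit check that every algebraic extension of a non-algebraic type forks (since its unique realization lies in $\mathrm{acl}(B) \setminus \mathrm{acl}(A)$), but the underlying idea is the same.
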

\begin{proof}
Superstability can be seen from type-counting, as it follows from Proposition \ref{weakmin} that $|S(A)| = |S(\emptyset)| + |A|$ for every set $A$. Therefore every type has a $U$-rank, and the fact that each non-algebraic type has a unique non-algebraic extension implies that each non-algebraic type has $U$-rank 1.
\end{proof}

\begin{proposition}
$T$ is pseudofinite.
\end{proposition}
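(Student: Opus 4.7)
The plan is to build for each $N \geq 1$ an explicit finite $L$-structure $M_N$ such that, for each fixed instance of the axioms of $T$, the sentence holds in $M_N$ for all but finitely many $N$. It will then follow by \L o\'s's theorem that any ultraproduct $M := \prod_{N \to \mathcal{U}} M_N$ along a non-principal ultrafilter $\mathcal{U}$ on $\omega$ models $T$; since $|M_N| = N^N \to \infty$, the ultraproduct $M$ is infinite and witnesses that $T$ is pseudofinite.

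For the construction, let the universe of $M_N$ be the set of length-$N$ sequences over $\{0,1,\ldots,N-1\}$. Call a string $\sigma \in \omega^{<\omega}$ \emph{$N$-usable} if $|\sigma| \leq N$ and every entry of $\sigma$ lies in $\{0,\ldots,N-1\}$. Interpret $U_\sigma^{M_N}$ as the set of elements of $M_N$ extending $\sigma$ when $\sigma$ is $N$-usable, and as $\emptyset$ otherwise. For an $N$-usable same-length pair $\sigma,\tau$, interpret $B_{\sigma,\tau}^{M_N}$ as the ``prefix-swap'' bijection sending $\sigma\rho \mapsto \tau\rho$ for $\rho \in \{0,\ldots,N-1\}^{N-|\sigma|}$; otherwise set $B_{\sigma,\tau}^{M_N} = \emptyset$. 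Informally, the universe of $M_N$ is the set of leaves of the complete $N$-ary tree of depth $N$, each $U_\sigma$ picks out the subtree above $\sigma$, and each $B_{\sigma,\tau}$ is the canonical identification of these subtrees.

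Next I would verify the axioms. Any single axiom instance mentions only finitely many strings, so for all sufficiently large $N$ they are simultaneously $N$-usable; then axioms (a)--(g) are essentially immediate from the definitions -- for instance, (e) holds because prefix-swap is a bijection on the length-$(N-|\sigma|)$ suffix coordinates, and (f), (g) hold because the swap operations compose and invert correctly. The interaction axiom (h) is the main content of the check: if $x$ extends $\sigma i$ and $y = \tau \alpha$ extends $\tau$ with $|\alpha| = N - |\sigma|$, then $B_{\sigma,\tau}(x,y)$ forces $\alpha$ to begin with $i$ (matching the initial coordinate of $x$ past $\sigma$), so $y$ extends $\tau i$ with the same residual suffix, which is exactly the witness of $B_{\sigma i, \tau i}(x,y)$; the converse direction is the same observation read backwards.

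The main potential pitfall is axiom (e)'s insistence that $B_{\sigma,\tau}$ be a bijection from $U_\sigma$ onto $U_\tau$: an asymmetry in $N$-usability between $\sigma$ and $\tau$ (for example, $\sigma$ usable but $\tau$ not) would leave $U_\sigma$ nonempty while forcing $B_{\sigma,\tau}^{M_N} = \emptyset$, violating (e) in that finite $M_N$. This is exactly why the argument is framed as ``each axiom instance holds for all but finitely many $N$'' rather than ``each $M_N \models T$'': for any fixed pair $(\sigma,\tau)$ the asymmetry disappears once $N$ exceeds their lengths and all their entries, which is all that \L o\'s's theorem needs.
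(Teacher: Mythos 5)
Your construction is the same as the paper's: the universe of $M_N$ is the set of length-$N$ sequences over $\{0,\ldots,N-1\}$, $U_\sigma$ picks out the subtree above $\sigma$ when $\sigma$ is ``$N$-usable'' (and is empty otherwise), $B_{\sigma,\tau}$ is the prefix-swap bijection, and the argument concludes via \L o\'s by noting that each axiom instance holds for all sufficiently large $N$. The reasoning is correct, and your identification of the asymmetric-usability issue as the reason to argue ``for all but finitely many $N$'' rather than ``each $M_N \models T$'' matches the paper's remark that the axioms hold once the finitely many strings appearing in them all lie in $\{0,\ldots,N-1\}^{\leq N}$ (or $\{0,\ldots,N-1\}^{<N}$ for axiom (h)).
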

\begin{proof}
We construct a sequence of finite structures $M_1, M_2, M_3, \ldots$ such that each axiom of $T$ is satisfied in $M_n$ for sufficiently large $n$.

Let $M_n$ have as its universe the set of all strings in $\{0,\ldots,n-1\}^n$. To avoid clashing notation with the language $L$, we denote elements of this universe by letter variables $a = a_1 a_2 \ldots a_n$. For $\sigma \in \{0,\ldots,n-1\}^{\leq n}$, we let $a \in U_\sigma(M_n)$ when $\sigma$ is an initial segment of $a$. Let $\sigma, \tau \in \{0,\ldots,n-1\}^{\leq n}$ have the same length. Suppose that $a \in U_\sigma(M_n)$ and $b \in U_\sigma(M_n)$, so that $a = \sigma a'$ and $b = \sigma b'$ for strings $a', b' \in \{0,\ldots,n-1\}^{\leq n}$. Then we set $B_{\sigma, \tau}(a,b)$ if and only if $a' = b'$.

If $\sigma \in \omega^{<\omega} / \{0,\ldots,n-1\}^{\leq n}$, we let $U_\sigma, B_{\sigma, \tau}$ and $B_{\tau, \sigma}$ be empty relations.

One easily verifies that the axioms for $T$ hold in $M_n$ whenever the strings $\sigma, \tau, \rho$ appearing in the axiom are elements of $\{0,1,\ldots,n-1\}^{\leq n}$ (or in the case of axiom schema (h), elements of $\{0,1,\ldots,n-1\}^{<n}$) and the numbers $i,j$ are elements of $\{0,1,\ldots,n-1\}$. Therefore each axiom holds in $M_n$ for sufficiently large $n$.

\end{proof}

\begin{proposition}
Let $M$ be a an ultraproduct of finite structures which satisfies $T$. Let $f \in \omega^{\omega}$. For each $n$ let $\sigma_n$ be the initial segment of $f$ consisting of the first $n$ entries. Then $\delta(M) > \delta(U_{\sigma_1}(M)) > \delta(U_{\sigma_2}(M)) > \ldots$. In particular, $M$ fails to satisfy condition (SA).
\end{proposition}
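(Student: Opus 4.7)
The plan is to prove each strict inequality $\delta(U_{\sigma_n}(M)) > \delta(U_{\sigma_{n+1}}(M))$ for $n \geq 0$ separately, treating $M = U_\emptyset(M)$ (by axiom (a)) as the $n = 0$ case. Once each such inequality is established, the sets $M \supsetneq U_{\sigma_1}(M) \supsetneq U_{\sigma_2}(M) \supsetneq \cdots$ form an infinite strictly descending chain of definable subsets with strictly decreasing $\delta$-dimension, which is exactly the configuration forbidden by (SA).

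To prove a single strict inequality I would unwind the definition: $\delta(U_{\sigma_n}(M)) > \delta(U_{\sigma_{n+1}}(M))$ is equivalent to the assertion that for every standard $k \in \mathbb{N}$, $k \cdot |U_{\sigma_{n+1}}(M_i)| < |U_{\sigma_n}(M_i)|$ for $\mathcal{U}$-almost all $i$. The combinatorial engine is that axioms (c), (d), (e) force $U_{\sigma_n}$ to contain arbitrarily many pairwise disjoint definable subsets, each in definable bijection with $U_{\sigma_{n+1}}$. Explicitly, I would fix $k$ and consider the $k+1$ sets $U_{\sigma_n 0}, U_{\sigma_n 1}, \ldots, U_{\sigma_n k}$: axiom (c) places each inside $U_{\sigma_n}$, axiom (d) makes them pairwise disjoint, and since each $\sigma_n j$ has length $n+1$ matching $\sigma_{n+1}$, axiom (e) supplies a definable bijection from $U_{\sigma_n j}$ onto $U_{\sigma_{n+1}}$.

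These are all first-order assertions holding in $M \models T$, so by the fundamental theorem of ultraproducts they hold in $\mathcal{U}$-almost every $M_i$. In such $M_i$ the bijections yield $|U_{\sigma_n j}(M_i)| = |U_{\sigma_{n+1}}(M_i)|$ for each $j \leq k$, and disjoint containment in $U_{\sigma_n}(M_i)$ gives $|U_{\sigma_n}(M_i)| \geq (k+1) \cdot |U_{\sigma_{n+1}}(M_i)|$. Axiom (b) also transfers to give $|U_{\sigma_{n+1}}(M_i)| \geq 1$ almost everywhere, which upgrades this to $|U_{\sigma_n}(M_i)| \geq (k+1) |U_{\sigma_{n+1}}(M_i)| > k \cdot |U_{\sigma_{n+1}}(M_i)|$. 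Since $k$ was arbitrary, this is the desired strict inequality.

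I do not anticipate a substantive obstacle: the tree structure encoded by axioms (c)--(e) automatically produces many equipotent disjoint sub-branches below each node, and the only bookkeeping care is to choose $k+1$ (rather than $k$) disjoint copies and invoke axiom (b) to convert the weak inequality $\geq$ into the strict $>$ demanded by the definition of $\delta$-strict-dominance.
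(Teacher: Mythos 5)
Your proof is correct and is essentially the paper's argument: both use the bijections $B_{\sigma j, \sigma j'}$ to equate the pseudofinite cardinalities of the sets $U_{\sigma j}$, then use their pairwise disjointness inside $U_\sigma$ to obtain $|U_\sigma| \geq (k+1)\,|U_{\sigma f(n)}|$ for arbitrary $k$. The only difference is presentational: the paper works directly with pseudofinite cardinalities in $\mathbb{R}^\star$, whereas you descend to the finite factors $M_i$ via the fundamental theorem of ultraproducts.
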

\begin{proof}

We show that for every string $\sigma \in \omega^{<\omega}$ and every number $k \in \omega$, $\delta(U_\sigma(M)) > \delta(U_{\sigma k}(M))$, which proves the proposition. For all numbers $j \in \omega$, the relation $B_{\sigma k, \sigma j}$ gives a bijection between $U_{\sigma k}(M)$ and $U_{\sigma j}(M)$. Therefore, taking pseudofinite cardinalities, we have $|U_{\sigma k}(M)| = |U_{\sigma j}(M)|$ for all $j \in \omega$. Let $n \in \omega$. Then $U_{\sigma 0}(M), \ldots, U_{\sigma n}(M)$ are disjoint subsets of $U_\sigma(M)$, whence $|U_\sigma(M)| \geq |U_{\sigma 0}(M)| + \ldots + |U_{\sigma n}(M)| = (n+1)|U_{\sigma k}(M)| > n |U_{\sigma k}(M)|$. Since this is true of every $n$, it follows that $\delta(U_\sigma(M)) > \delta(U_{\sigma k}(M))$.
\end{proof} 

\begin{proposition}
\label{defdisjoint}
Let $M = \prod_{i \to \mathcal{U}} M_i$ be a pseudofinite ultraproduct which satisfies $T$ and let $M^+$ be its counting pair extension. Then $M^+$ is parametrically definable in the disjoint union $M \sqcup \mathbb{R}^\star$.
\end{proposition}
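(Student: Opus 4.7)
The plan is to verify the criterion in Proposition \ref{defunion} with $A = \emptyset$ and $B = \{|U_\sigma(M)| : \sigma \in \omega^{<\omega}\}$. By axiom (e), the bijection $B_{\sigma,\tau}$ forces $|U_\sigma(M)| = |U_\tau(M)|$ whenever $|\sigma| = |\tau|$, so writing $u_n := |U_\sigma(M)|$ for any $\sigma$ of length $n$, we have $B = \{u_n : n \in \omega\}$. For each formula $\varphi(x,\bar{y})$ with $x$ a single variable, I aim to exhibit $\emptyset$-definable formulas $\psi_1(\bar{y}),\ldots,\psi_r(\bar{y})$ partitioning $M^{|\bar{y}|}$ and hyperreals $c_1,\ldots,c_r$ that are $\mathbb{Z}$-linear combinations of $1$ and the $u_n$'s, and hence in $\mathbb{Q}(B) \subseteq \mathrm{acl}(B)$, which give and define the cardinalities of $\varphi(x,\bar{y})$.

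First I would apply the quantifier elimination result to replace $\varphi$ with a quantifier-free formula, and then put it in disjunctive normal form with mutually disjoint disjuncts via the standard trick $\bigvee_k (\Phi_k \wedge \neg\Phi_1 \wedge \ldots \wedge \neg\Phi_{k-1})$. Distributing the negations and iterating reduces the problem, by the additivity of pseudofinite cardinality over disjoint definable sets, to computing $|\Phi(M,\bar{b})|$ for a single conjunction of literals $\Phi(x,\bar{y})$. Literals not mentioning $x$ carve out a $\emptyset$-definable subset of parameter space outside of which $|\Phi(M,\bar{b})| = 0$; on the remainder I focus on the $x$-literals.

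If $\Phi$ contains a positive literal $B_{\sigma,\tau}(x,y_i)$ or $x = y_i$, then $x$ is pinned down as a specific function of $\bar{b}$, and the cardinality is $0$ or $1$ according to a $\emptyset$-definable condition. Otherwise, using the tree structure of the $U_\sigma$'s (any two $U_\sigma, U_\tau$ are either nested or disjoint), the positive $U_\sigma(x)$ literals collapse to a single $U_{\sigma^*}(x)$ or force emptiness, and after pruning redundant negative $\neg U_{\tau_j}(x)$ literals I may assume the $\tau_j$'s are pairwise incomparable proper extensions of $\sigma^*$, so that $U_{\tau_1}(M), \ldots, U_{\tau_s}(M)$ are disjoint subsets of $U_{\sigma^*}(M)$. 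The remaining finitely many literals $\neg B_{\sigma_\ell,\tau_\ell}(x,y_{i_\ell})$ and $\neg(x = y_i)$ exclude a finite set $E(\bar{b})$ of at most $k$ points, giving
\[\Phi(M,\bar{b}) = \Bigl(U_{\sigma^*}(M) \setminus \bigcup_j U_{\tau_j}(M)\Bigr) \setminus E(\bar{b}),\]
with cardinality $u_{|\sigma^*|} - \sum_j u_{|\tau_j|} - m(\bar{b})$ where $m(\bar{b}) := |E(\bar{b}) \cap (U_{\sigma^*}(M) \setminus \bigcup_j U_{\tau_j}(M))|$.

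The main obstacle is the bookkeeping for $m(\bar{b})$: tracking which excluded points coincide with each other and which fall inside the ``main region''. However, each point of $E(\bar{b})$ is named by a fixed atomic $L$-term in $\bar{b}$ and $|E(\bar{b})|$ is bounded uniformly in $\bar{b}$, so both the coincidence pattern and region-membership are determined by the quantifier-free type of $\bar{b}$ restricted to a finite set of atomic formulas. Consequently $m(\bar{b})$ takes only finitely many integer values, each on a $\emptyset$-definable piece, and assembling these pieces over all the disjoint disjuncts yields the required $\psi_i$'s and $c_i$'s, verifying the hypothesis of Proposition \ref{defunion}.
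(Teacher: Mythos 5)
Your proposal is correct and follows the same overall strategy as the paper: reduce via Proposition \ref{defunion} to showing each single-variable formula has its cardinalities given and defined, then invoke quantifier elimination to work with Boolean combinations of the atomics $U_\sigma(x)$, $B_{\sigma,\tau}(x,y_i)$, $x=y_i$. Where you diverge is in how you handle negated literals. You put the formula into a \emph{disjoint} DNF and then directly compute $|\Phi(M,\bar b)|$ for each conjunction of literals, which forces you to track the tree structure of the $U_\tau$'s, the finite excluded set $E(\bar b)$ coming from the negated $B$-literals and disequalities, the coincidence pattern among its points, and which of them land in the main region. This bookkeeping is correct (the coincidence/region data is indeed a quantifier-free $\emptyset$-definable condition on $\bar b$, so $m(\bar b)$ is piecewise constant on $\emptyset$-definable pieces), but it is exactly the work the paper avoids: the paper instead observes that, by inclusion--exclusion, the cardinality of any Boolean combination is a $\mathbb{Z}$-linear combination of cardinalities of \emph{positive} conjunctions, and for a conjunction of atomics (all containing $x$) the analysis collapses to two trivial cases: if a $B_{\tau,\rho}(x,y_i)$ appears the set is empty or a singleton, otherwise the $U_\sigma(x)$'s reduce to a single $U_{\sigma^*}(x)$ of constant cardinality. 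So your argument buys nothing extra, at the cost of the $m(\bar b)$ case analysis; if you want to streamline it, note that the disjoint-DNF step is unnecessary once you let inclusion--exclusion absorb all the negations.
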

\begin{proof}
We use Proposition \ref{defunion}. Let $\varphi(x,\bar{y})$ be an $L$-formula. By quantifier elimination, $\varphi(x,\bar{y})$ is equivalent to a disjunction of conjunctions of atomic formulas and their negations. In fact, it suffices to prove the statement in Proposition \ref{defunion} when $\varphi(x,\bar{y})$ is merely a single conjunction of atomic formulas (not negations), as pseudofinite cardinality satisfies inclusion-exclusion properties. For example, if we have defined $f_\varphi$ for each $\varphi$ a conjunction of formulas from $\{\phi, \psi, \theta\}$ , then we can define $f_{\phi \vee (\psi \wedge \neg \theta)}$ as $f_{\phi} + f_{\psi \wedge \neg \theta} - f_{\phi \wedge \psi \wedge \neg \theta} = f_\phi + (f_{\psi} - f_{\psi \wedge \theta}) - (f_{\phi \wedge \psi} - f_{\phi \wedge \psi \wedge \theta})$. 

So it suffices to consider a $\varphi(x,\bar{y})$ a conjunction of formulas of the form $U_\sigma(x)$ and $B_{\tau, \rho}(x,y_i)$. A conjunction of predicates $U_\sigma(x)$ is equivalent to a single $U_\sigma(x)$. If there is an instance of $B_{\tau, \rho}$ in the conjunction, then the defined set $\varphi(M,\bar{y})$ is either empty or a singleton, whence $(c_1,c_2) = (0,1)$ and $(\psi_1(\bar{y}), \psi_2(\bar{y})) = (\neg \exists \bar{x} \varphi(x,\bar{y}), \exists \bar{x} \varphi(x,\bar{y}))$ give and define the cardinalities of $\varphi(x,\bar{y})$. Otherwise $\varphi(x,\bar{y})$ is equivalent to the single predicate $U_\sigma(x)$; letting $c = |U_\sigma(M)|$, we get that $(c)$ and $(\bar{y} = \bar{y})$ give and define the cardinalities of $\varphi(x,\bar{y})$.
\end{proof}

\color{black}

\section{Simple does not imply (A)}

The theory in this result is inspired by Example 4.1.3 in \cite{psas}, which is the theory of a single equivalence relation with infinitely many equivalence classes. As explained in the introductory section, the condition $(A)_\phi$ fails in this ultraproduct for the formula $\phi(x,y) :=$ ``$\neg x E y$", although the theory of the ultraproduct is stable. In this way they show that direct converse to their result ``$(A)$ implies a simple and low theory'' is false.

However, there are many ways to satisfy this theory in a product of finite structures -- any sequence of finite equivalence relations such that both the number of classes and the size of the smallest class go to infinity will work. In particular, we can consider $\prod_{n \to \mathcal{U}} M_n$ where $M_n$ has $n$ equivalence classes, each of size $n$. In this case, the dimension operator $\delta$ is extremely well-behaved, and the condition (A) is satisfied (and the stronger condition (SA) is as well). Thus it still seemed possible to give a weaker potential converse, that given a simple pseudofinite theory $T$ one can find some pseudofinite ultraproduct $M \models T$ such that (A) is satisfied in $M$.

The example in this section, which was suggested by Alex Kruckman and Cameron Donnay Hill in conversation with the author, shows that this weakened converse is still false. Our theory is an expansion of the theory of infinite equivalence relation with infinitely many infinite classes, with additional functions which \emph{force} the cardinalities of equivalence classes to grow at a rate fast enough to have (A) fail at the formula ``$\neg x E y$'' as above. We do this by introducing a pairing function which bijects the largest equivalence class with the set of \emph{ordered pairs} from the second largest, and bijects that class with the set of ordered pairs from the third largest, and so on. We realize this pairing function as a pair of unary functions $f$ and $g$, so that $x \mapsto (f(x), g(x))$ is our bijection.

We give our construction in detail. Our language $L$ contains:
\begin{itemize}
\item A binary relation $E$, and
\item Two unary functions $f$ and $g$.
\end{itemize}

We frequently refer to $L$-terms as $\sigma(x)$, where $\sigma$ is a string in $\{f,g\}^{<\omega}$.\\

The theory $T_0$ says:
\begin{enumerate}[label=(\alph*)]
\item $E$ is an equivalence relation;
\item $f$ and $g$ are well-defined on $E$-equivalence classes, and project down to the same function on $M / E$. That is, if $x E y$ then $f(x) E g(y)$ (which implies $f(x) E f(y)$ and $g(x) E g(y)$);
\item There is a single $E$-class $C_{fin}$ such that $C_{fin} = \{x : f(x) = x\} = \{x : g(x) = x\}$;
\item There is a single $E$-class $C_{init}$ such that $C_{init} = \{x : f^{-1}(x) = \emptyset\} = \{x : g^{-1}(x) = \emptyset\}$;
\item If $x,y \notin C_{fin}$ and $f(x) E f(y)$ then $x E y$ (that is, $f,g$ are injective on equivalence classes except for $C_{fin}$ and $f^{-1}(C_{fin})$).
\item For each equivalence class $C$ which is not $C_{init}$ and each pair $(x, y) \in C^2$ there is a unique $z$ (if $C = C_{fin}$ then there is a unique $z \notin C_{fin}$) such that $f(z) = x$ and $g(z) = y$.

$T_0$ is finitely axiomatizable and has finite models (see Proposition \ref{eqpairpseudo}).

Our theory $T$ is the theory $T_0$ together with axiom schemata for
\item $\neg (f^k(x) E x)$ for each $k \in \omega$;
\item $E$ has infinitely many classes;
\item The equivalence classes of $E$ are infinite
\end{enumerate}

Later on, the following abbreviation will be helpful.

\begin{definition}
Let $M \models T$ and let $k \in \omega$. Then $C_{init+k}$ denotes the equivalence class $f^k(C_{init})$, and $C_{fin-k}$ denotes the equivalence class $f^{-k}(C_{fin}) \setminus f^{-(k-1)}(C_{fin})$.
\end{definition}

Clearly each $C_{init+k}$ and $C_{fin-k}$ is $\emptyset$-definable. When we prove quantifier elimination for $T$, we will add unary predicates for each $C_{init+k}$ to our language, since each of these classes is not otherwise quantifier-free definable.

Here we describe a method for constructing models of $T$. First let us define a simpler language and theory, which we will also use in proving quantifier elimination for $T$.

\begin{definition}
\label{tstar}
The language $L^\star$ has a unary function $S$ and two constant symbols $c_{init}, c_{fin}$.

The theory $T^\star$ says
\begin{itemize}
\item If $x \neq c_{init}$ then there is a $y$ with $S(y) = x$. If $x \neq c_{fin}$ then this $y$ is unique and $y \neq x$; if $x = c_{fin}$ then $S(x) = x$ and there is a unique $y \neq x$ with $S(y) = x$. There is no $y$ with $S(y) = c_{init}$.

\item If $x \neq c_{fin}$ then $S^n(x) \neq x$ (for all $n$)
\end{itemize}
\end{definition}

One class of models of $T^\star$ is \[(\mathcal{M}: S; c_{init}; c_{fin}) = ([a,b] : x \mapsto \min (succ(x),b); a; b\},\] where $[a,b]$ is an infinite closed interval of a discrete linear order, and $succ(x)$ is the successor function. $T^\star$ is strongly minimal and is known to have quantifier elimination (a slight variation on, for example, Exercise 3.4.3 in \cite{marker}). We shall use the quantifier elimination of $T^\star$ to give a quantifier elimination for $T$. If $M$ is an $L$-structure which satisfies $T$, we can define an $M^\star$ in $M^{eq}$ on the collection of equivalence classes $M / E$, where $c_{init}$ is $C_{init}$, $c_{fin}$ is $C_{fin}$, and $S([a]_E) = [f(a)]_E = [g(a)]_E$. Then $M^\star$ is a model of $T^\star$.

Let $I$ be a model of $T^\star$, and let $X$ be an infinite set and let $\pi : X \to X \times X$ be a bijection of $X$ with its own square. We define the $L$-structure $M_{I,\pi}$ as follows. Let the universe of $M_{I,\pi}$ be $I \times X$. Let $C_{init}(M)$ be $\{c_{init}\} \times X$ and let $C_{fin}(M)$ be $\{c_{fin}\} \times X$. Let $(i,x) E (i',x')$ exactly when $i = i'$. If $i = c_{fin}$ let $f((i,x)) = g((i,x)) = (i,x)$. If $i \neq c_{fin}$, let $f((i,x)) = (S(i), \pi_1(x))$ and $g((i,x)) = (S(i), \pi_2(x))$, where $\pi_1, \pi_2 : X \to X$ are the unary functions such that $\pi(x) = (\pi_1(x),\pi_2(x))$. The resulting structure is a model of $T$.

We observe, tangentially, that if $\pi, \pi'$  are different pairing functions on $X$, the structures $M_{I,\pi}$ and $M_{I,\pi'}$ may not be isomorphic. Let $X = \omega$. Suppose there is an $a \in X$ such that $\pi(a) = (a,a)$. Then $\sigma((a,c_{init})) = \tau((a,c_{init}))$ for all $\sigma, \tau \in \{f,g\}^{<\omega}$ of the same length. Suppose $\pi'$ has the property that whenever $\pi'(x) = (y,y)$ (so that $\pi_1'(x) = \pi_2'(x)$) we must have $y < x$. Then the type $\{C_{init}(x)\} \cup \{\sigma(x) = \tau(x) : \sigma, \tau \in \{f,g\}^{<\omega}$ are the same length$\}$ is not realized in $M_{I,\pi'}$ (for otherwise there is $a \in X$ with $\pi(a) = (b,b)$, and $\pi(b) = (c,c)$, and so on, giving a descending sequence $a > b > c > \ldots$), and so $M_{I,\pi}$ and $M_{I,\pi'}$ are not isomorphic.

\begin{proposition}
\label{eqpairpseudo}
1. $T$ is pseudofinite.

2.  Let $M = \prod_{i \to \mathcal{U}} M_i$ be a pseudofinite ultraproduct which satisfies $T$. Let $a_0,a_1,\ldots \in M$ such that $a_i \in f^{i}(C_{init})$.
 Then $\delta(\neg(x E a_0)) > \delta(\neg(x E a_0) \wedge \neg(x E a_1)) > \ldots$. In particular, $M$ fails to satisfy condition (A$)_\phi$ for the formula $\phi(x,y) =$``$\neg x E y$", and therefore $M$ fails to satisfy condtion (A).
\end{proposition}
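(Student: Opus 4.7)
The plan is to prove the two parts of the proposition separately. For part 1, I construct finite $L$-structures $M_n$ whose ultraproduct satisfies $T$. For part 2, I combine axiom (f) with the fact that $M/E$ must be a single linear chain in any finite model of $T_0$ to derive a doubly-exponential size relation between successive classes; this is what forces $\delta$ to strictly decrease as we add conjuncts.

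For part 1, let $M_n$ be the disjoint union $\bigsqcup_{i=0}^{n} \{0,\dots,n-1\}^{2^{n-i}}$, writing $C_i^{(n)} := \{0,\dots,n-1\}^{2^{n-i}}$ for the piece at level $i$. Declare each $C_i^{(n)}$ to be an $E$-class. For $s \in C_i^{(n)}$ with $i < n$, let $f(s)$ and $g(s)$ be the first and second halves of $s$, lying in $C_{i+1}^{(n)}$; on $C_n^{(n)}$, let $f$ and $g$ be the identity. The pairing axiom (f) holds because the split-in-half map $C_i^{(n)} \to (C_{i+1}^{(n)})^2$ is literally a bijection, and the remaining $T_0$ axioms (a)--(e) are routine. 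Axiom (g) holds on $C_i^{(n)}$ for $i < n$ and any $k$ because $f^k$ strictly advances the chain index off $C_{fin}^{(n)}$. Axioms (h) and (i) pass to the ultraproduct since both the chain length $n+1$ and the base class size $|C_n^{(n)}| = n$ tend to infinity, so $\prod_{n \to \mathcal{U}} M_n \models T$.

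For part 2, I first argue that in any finite model of $T_0$ the quotient $M_i / E$ is a single linear chain $C_{init} = D_0, D_1, \dots, D_{\ell_i} = C_{fin}$. Axioms (b) and (e) make the quotient-level successor induced by $f$ injective off $C_{fin}$, so every class has at most one predecessor; axioms (c), (d), and finiteness pin down the endpoints. Axiom (f) then gives $|D_{j-1}| = |D_j|^2$ for each $j \geq 1$. These first-order relations transfer to the ultraproduct, yielding $|C_{init+j-1}| = |C_{init+j}|^2$ for every standard $j \geq 1$. Now set $X_k := M \setminus \bigcup_{j=0}^{k} C_{init+j}$, the definable set named by $\bigwedge_{j \leq k} \neg x E a_j$. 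In each finite model the corresponding set has size $\sum_{s=0}^{\ell_i - k - 1} m_i^{2^s}$, where $m_i := |C_{fin}(M_i)|$. A short induction shows that for $m_i \geq 2$ this sum lies between $m_i^{2^{\ell_i - k - 1}} = |C_{init+k+1}(M_i)|$ and $2\, |C_{init+k+1}(M_i)|$. Transferring to $M$, where $|C_{fin}|$ is non-standard by axiom (i), we get $|C_{init+k+1}| \leq |X_k| \leq 2\,|C_{init+k+1}|$. Combining this with $|C_{init+k+1}| = |C_{init+k+2}|^2$ gives
\[
\frac{|X_k|}{|X_{k+1}|} \;\geq\; \frac{|C_{init+k+1}|}{2\,|C_{init+k+2}|} \;=\; \frac{|C_{init+k+2}|}{2},
\]
which exceeds every standard natural number since $|C_{init+k+2}|$ is non-standard. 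Hence $\delta(X_k) > \delta(X_{k+1})$ for every $k$, so (A)$_\phi$ fails for $\phi(x,y) = \neg x E y$, and a fortiori $M$ fails to satisfy (A).

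The main obstacle is the finite-model structure theorem -- that any finite model of $T_0$ has its classes arranged in a single chain whose sizes satisfy $|C| = |\sigma(C)|^2$ for consecutive classes -- because the conclusion of the proposition is about arbitrary pseudofinite ultraproducts of $T$, not just the specific construction in part 1. This structural fact falls out of axioms (b)--(f) via a careful case analysis on the induced successor on $M/E$. Once it is available, the estimate $|X_k| \leq 2\,|C_{init+k+1}|$ is a geometric-series-type bound which passes to the ultraproduct by \L o\'s' theorem; the non-standardness of class sizes from axiom (i) then supplies the factor needed to separate $\delta(X_k)$ from $\delta(X_{k+1})$.
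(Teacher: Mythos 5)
Your argument follows the same overall strategy as the paper's: construct the explicit chain models for pseudofiniteness, then use the structural classification of finite models of $T_0$ as doubly-exponential chains plus \L o\'s transfer to force $\delta$ to decrease at each step of $\bigwedge_{j\le k}\neg x E a_j$. The organization differs mildly and, to my eye, favorably. Where the paper computes $|\varphi_k(M_{n,m})| = D_{n-k-2,m}$ and runs a polynomial-degree comparison entirely inside the finite structures, you extract the clean recurrence $|C_{init+j-1}| = |C_{init+j}|^2$, transfer it to $\mathbb{R}^\star$ once and for all (it is just the definable bijection $(f,g)\colon C_{init+j-1}\to C_{init+j}^2$), and then only dip into the finite models for the geometric-series bound $|C_{init+k+1}| \le |X_k| \le 2|C_{init+k+1}|$. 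Both routes terminate in the same observation that the nonstandardness of $|C_{init+k+2}|$ (coming from axiom (i)) makes the ratio infinite. Your part~1 construction is the paper's $M_{n+1,n}$ up to re-indexing.

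One shared imprecision worth flagging: the claim that every finite model of $T_0$ has $M/E$ a single $S$-chain from $C_{init}$ to $C_{fin}$ is not literally forced by axioms (a)--(f). Injectivity of $S$ off $C_{fin}$ and surjectivity onto $M/E\setminus\{C_{init}\}$ do produce a trunk chain, but they permit additional $S$-cycles disjoint from the trunk. Applying the pairing axiom around such a cycle gives $|C| = |C|^{2^r}$, so those extra classes are singletons; a finite model of $T_0$ may therefore be a chain \emph{together with} finitely many singleton loop-classes, and your identity $|X_k| = \sum_{s=0}^{\ell_i-k-1} m_i^{2^s}$ would then under-count. The paper silently makes the same assumption when it asserts $M_{n,m}$ is unique up to isomorphism given $(n,m)$. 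The gap is harmless but should be closed: since $M\models T$ has all $E$-classes infinite (axiom (i)), the definable set $\{x : |[x]_E| = 1\}$ is empty in $M$, hence empty in $\mathcal U$-almost all $M_i$; so $\mathcal U$-almost all $M_i$ have no singleton classes at all, which both forces $m_i\ge 2$ (as you need for the factor-of-$2$ bound) and kills the cycle classes. With that sentence added, your cardinality estimates hold on a $\mathcal U$-big set of indices and the transfer goes through.
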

\begin{proof}
1. For every pair of numbers $n,m$, there is a unique (up to isomorphism) finite structure $M_{n,m} \models T_0$ which has $n$ equivalence classes such that $|C_{fin}(M_{n,m})| = m$. In that case, $|M_{n,m}| = \sum_{i=0}^{n-1} m^{2^i}$. Each instance of the axiom schema for $T$ is clearly satisfied in $M_{n,m}$ for sufficiently large $n,m$. Therefore $T$ is pseudofinite. If $\mathcal{U}$ is an ultrafilter on $\omega \times \omega$ then $\prod_{(n,m) \to \mathcal{U}} M_{n,m} \models T_0$ always, and $\prod_{(n,m) \to \mathcal{U}} M_{n,m} \models T$ iff $\{(n,m) : n \geq j$ and $m \geq k\}$ is $\mathcal{U}$-big for all $j,k \in \omega$.

2. For each $k$, let $\varphi_k(x)$ be the formula $\bigwedge_{i=0}^k x \notin f^{i}(C_{init})$. 

For $k,m \in \omega$ let $D_{k,m}$ be the number $\sum_{i=0}^k m^{2^i}$. Then for $n > k$ we have that $|\varphi_k(M_{n,m})| = D_{(n-k-2),m}$, as $|M_{n,m}|$ is $D_{n-1,m}$ and $f^{j}(C_{init}(M_{n,m}))$ has size $m^{2^{(n-1)-j}}$ for $0 \leq j \leq k$.

If $\epsilon > 0$ is a positive real and $j < k$ then $(1 - \epsilon) m^{2^k-2^j} D_{j,m} < D_{k,m}$ for sufficiently large $m$, as both sides of this inequality are degree-$2^k$ polynomials in $m$. Therefore if $N \in \omega$ then $N \cdot \varphi_k(M_{n,m}) < \varphi_j(M_{n,m})$ for sufficiently large $m$ -- that is, for finite models of $T_0$ with $|C_{init}| > c_{N,j,k}$ for some $c_{N,j,k} \in \omega$. So if $(M_\lambda : \lambda \in \Lambda)$ is a family of finite models of $T_0$ and $\mathcal{U}$ is an ultrafilter on $\Lambda$ such that $M := \prod_{i \to \mathcal{U}} M_\lambda \models T$, then $\{\lambda \in \Lambda : |C_{init}(M_\lambda)| > c_{N,j,k}\} \in \mathcal{U}$, and so $\{\lambda \in \Lambda : N \cdot |\varphi_k(M_\lambda)| \leq |\varphi_j(M_\lambda)|\} \in \mathcal{U}$. Since this is true for every $N$, we obtain that $\delta(\varphi_k(M_\lambda)) < \delta(\varphi_j(M_\lambda))$. If $a_0,a_1,\ldots \in M$ are such that $a_i \in f^i(C_{init}(M))$ for each $i$, then $\varphi_k(x)$ is equivalent to $\bigwedge_{i=0}^k \neg x E a_i$, which proves the failure of $(A)_\phi$ for the formula $\phi(x,y) := \neg x E y$.
\end{proof}

The following lemma is the core property of $T$ from which we obtain quantifier elimination, as well as a more general result about the cardinalities of definable sets. Intuitively, it says that we may ``coordinatize'' an element of an equivalence class $C$ by freely choosing $2^n$ elements of $f^n(C)$, generalizing axiom (f) of $T_0$.

\begin{lemma}
\label{pairlem}
Let $\{s_1,\ldots,s_{2^n}\}$ be a complete enumeration of the strings in $\{f,g\}^n$. Let $b_1,\ldots,b_{2^n} \in C$ for some equivalence class $C \notin \{C_{init}, \ldots, C_{init+n-1}\}$. Then if $C \neq C_{fin}$, there is a unique $a \in M$ such that $s_i(a) = b_i$ for each $i$ (clearly, $a \in f^{-n}(C)$). If $C = C_{fin}$, there is a unique solution $a \in C_{fin-k}$ for each $k$ such that $b_i = b_j$ whenever $s_i$ and $s_j$ agree on their final $k$ bits (in particular, there is a unique solution in $C_{fin-n}$), and these are the only solutions in $M$.
\end{lemma}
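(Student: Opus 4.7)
The natural strategy is induction on $n$. The base case $n=1$ is essentially a direct reading of axiom (f): given $b_1, b_2 \in C$ with $C \neq C_{init}$, axiom (f) yields a unique $a \in f^{-1}(C)$ with $f(a) = b_1, g(a) = b_2$ (and when $C = C_{fin}$ this $a$ sits in $C_{fin-1}$). When $C = C_{fin}$ there is in addition the $k = 0$ solution $a = b_1 = b_2 \in C_{fin}$, available exactly when $b_1 = b_2$, which is forced by axiom (c) since $f$ and $g$ fix every element of $C_{fin}$.

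For the inductive step $n \to n+1$, I would factor each length-$(n+1)$ string as $s = tc$ with $t \in \{f,g\}^n$ and $c \in \{f,g\}$, so that $s(a) = t(c(a))$. Partitioning the $2^{n+1}$ pairs $(s_i, b_i)$ by the final character $c$ produces two $2^n$-sized systems, indexed by $t \in \{f,g\}^n$; write $b^{(f)}_t, b^{(g)}_t \in C$ for the associated data. A candidate $a$ is the same as a pair $(a_f, a_g) = (f(a), g(a))$ satisfying $t(a_f) = b^{(f)}_t$ and $t(a_g) = b^{(g)}_t$ for every $t$, followed by an application of axiom (f) to recover $a$ from $(a_f, a_g)$.

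If $C \neq C_{fin}$, the inductive hypothesis applied separately to the $f$- and $g$-systems yields unique $a_f, a_g$ lying in the single equivalence class $f^{-n}(C)$, and axiom (f) then produces the unique $a \in f^{-(n+1)}(C)$. If $C = C_{fin}$, the key combinatorial observation is that two $(n+1)$-strings $tf$ and $t'f$ agree on their last $k$ characters iff $t, t'$ agree on their last $k-1$ characters, so the level-$k$ constraint on the $b_i$ descends to the level-$(k-1)$ constraint on the $b^{(f)}_t$ and on the $b^{(g)}_t$ separately. For $k \geq 1$ the inductive hypothesis at parameter $k-1$ yields unique $a_f, a_g \in C_{fin-(k-1)}$, and axiom (f) furnishes a unique $a \in C_{fin-k}$, invoking its ``unique $z \notin C_{fin}$'' clause precisely when $k = 1$ (where $a_f, a_g \in C_{fin}$) and its ordinary clause otherwise. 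The $k=0$ case collapses directly by axiom (c) to $a = b_1 = \cdots = b_{2^{n+1}}$.

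For the ``only solutions in $M$'' clause: any $a$ with every $s_i(a) \in C_{fin}$ satisfies $f^{n+1}(a) \in C_{fin}$, so $a \in C_{fin-k}$ for some $0 \leq k \leq n+1$; inside $C_{fin-k}$ the last $n+1-k$ characters of each $s_i$ are applied inside $C_{fin}$ and act as the identity, so $s_i(a)$ depends only on the final $k$ characters of $s_i$, which forces the level-$k$ constraint on the $b_i$ and identifies $a$ with the level-$k$ solution already constructed. The main technical obstacle I anticipate is the boundary $k=1$, where axiom (f)'s two clauses switch roles, but once the decomposition $s = tc$ and the constraint-translation observation are in place, the remainder of the argument is mechanical.
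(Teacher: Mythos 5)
Your proof is correct and follows essentially the same route as the paper (induction on $n$, grouping length-$(n+1)$ strings by the innermost character, solving for $f(a)$ and $g(a)$ via the inductive hypothesis, then recovering $a$ from Axiom (f)); if anything your $C=C_{fin}$ case, explicitly descending from the level-$k$ constraint on the $b_i$ to level-$(k-1)$ constraints on the two $2^n$-sized subsystems and then applying Axiom (f) once more, is more carefully spelled out than the paper's version, which reduces to a system in $\{f,g\}^k$-strings and invokes the lemma at the smaller parameter without making the recursion explicit. One cosmetic slip in your uniqueness paragraph: under your convention $s(a)=t(c(a))$ with $c$ the final character, the characters that act as the identity inside $C_{fin}$ are those applied \emph{last}, i.e.\ the \emph{initial} $n+1-k$ characters of the string rather than the last $n+1-k$ --- but the conclusion you draw (that $s_i(a)$ depends only on the final $k$ characters of $s_i$) is the right one and the argument stands.
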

\begin{proof}
We prove this by induction on $n$. For $n = 1$, this is Axiom (f) for $C \neq C_{fin}$, and follows from Axioms (f) and (c) for $C = C_{fin}$. Suppose the claim has been shown for $n$. Let us enumerate the strings in $\{f,g\}^{n+1}$ as $\{f s_1, fs_2,\ldots,f s_n, g s_1, g s_2, \ldots, g s_n\}$, where $\{s_1,\ldots,s_{2^n}\}$ is $\{f,g\}^n$. Let \[b_1,\ldots,b_{2^n},b_{2^n+1},\ldots,b_{2^{n+1}} \in C.\] 
 
If $C \neq C_{fin}$ then by induction, there are unique elements $c_1,c_2 \in M$ such that $s_i(c_1)= b_i$ and $s_i(c_2) = b_{2^n + i}$ for $i \leq 2^n$, and there is a unique $a \in M$ such that $f(a) = c_1$ and $g(a) = c_2$. It follows that $a$ is the unique element of $M$ such that $f s_i(a) = b_i$ and $g s_i(a) = b_{2^n + i}$ for each $1 \leq i \leq 2^n$. 

Suppose $C = C_{fin}$. If $s_i(x) \in C_{fin}$, we must have $x \in C_{fin-k}$ for some $k \leq n$. If $x \in C_{fin - k}$ then $s(x) = \tau(x)$ where $\tau$ is the final segment of $s$ of length $k$. It follows that if $s_i, s_j \in \{f,g\}^n$ agree on their final $k$ bits and $b_i \neq b_j$, then there is no solution to $s_i(x) = b_i \wedge s_j(x) = b_j$ in $C_{fin-k}$. On the other hand, suppose that $b_i = b_j = b_\tau$ whenever $s_i$ and $s_j$ both have $\tau \in \{f,g\}^k$ as a final segment. Then for $x \in C_{fin-k}$, the formula $\bigwedge_{i=1}^{2^n} s_i(x) = b_i$ is equivalent to $\bigwedge_{\tau \in \{f,g\}^k} \tau(x) = b_\tau$, which has a unique solution in $C_{fin-k}$.

\end{proof}

We note that Lemma \ref{pairlem} implies that for each $k$, we have $a = b$ if and only if $s(a) = s(b)$ for all $s \in \{f,g\}^k$ and $a E b$. This is used for syntactical manipulation of formulas (Lemma \ref{intermedform}) in our proof of quantifier elimination.

The stability of $T$ is an easy consequence of quantifier elimination in $T$ (Proposition \ref{qe}). The proof of quantifier elimination is rather involved, so we defer it until the end of the section.

\begin{proposition}
\label{stable}
$T$ is stable.
\end{proposition}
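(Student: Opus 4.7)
The plan is to invoke the quantifier elimination of Proposition~\ref{qe} (in the language expanded by the predicates $C_{init+k}$ and $C_{fin-k}$ used there) and then to bound $|S_1(A)|$ for arbitrary parameter sets $A$, showing that $T$ is $2^{\aleph_0}$-stable and hence stable. Since the language is countable and its only function symbols are the unary $f$ and $g$, each quantifier-free atomic $L(A)$-formula in one variable $x$ is built from terms $\sigma(x)$ for $\sigma \in \{f,g\}^{<\omega}$ and takes one of the forms: a class-predicate assertion on $\sigma(x)$; an $E$-assertion $\sigma(x) \mathrel{E} \tau(x)$ or $\sigma(x) \mathrel{E} \tau(a)$; or an equality $\sigma(x) = \tau(x)$ or $\sigma(x) = \tau(a)$, for $a \in A$ and $\tau \in \{f,g\}^{<\omega}$.

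Next I would argue that a type $p(x) \in S_1(A)$ is specified by three pieces of data: (i) a labeling of the countable tree $\{f,g\}^{<\omega}$ recording which class predicate (if any) contains $\sigma(x)$; (ii) the equivalence pattern on $\{\sigma(x)\}_\sigma$ under $E$, including which $\sigma(x)$ are $E$-related to which $\tau(a)$; and (iii) for each $\sigma$, the set of equalities $\sigma(x) = \tau(a)$ with $a \in A$ and $\tau \in \{f,g\}^{<\omega}$. Data (i) and (ii) are sequences of countably many choices from countable sets, contributing at most $2^{\aleph_0}$ possibilities. For each $\sigma$, data (iii) allows at most $|A| + \aleph_0$ options, giving at most $(|A|+\aleph_0)^{\aleph_0}$ possibilities in total. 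Thus $|S_1(A)| \leq (|A|+\aleph_0)^{\aleph_0}$, which is $\leq 2^{\aleph_0}$ whenever $|A| \leq 2^{\aleph_0}$, so $T$ is $2^{\aleph_0}$-stable.

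The main obstacle is verifying that data (i)--(iii) really does constitute a complete set of invariants of $p$, i.e.\ that it determines the truth in $p$ of every atomic $L(A)$-formula. This is precisely where Lemma~\ref{pairlem} carries the load: once the $E$-class of $x$ and the values of the coordinates $\{\sigma(x) : \sigma \in \{f,g\}^n\}$ are fixed for all $n$ (with the extra case-analysis near $C_{fin}$ already codified in the lemma), the element $x$ is determined within its class, and no further atomic relations can arise beyond those encoded above. The only subtlety to watch is that equalities $\sigma(x) \mathrel{E} \tau(a)$ and $\sigma(x) = \tau(a)$ must be threaded consistently through the labeling and equivalence pattern, in particular because $f$ and $g$ collapse on $C_{fin}$; but this is exactly what the combinatorics of Lemma~\ref{pairlem} encodes, so the counting goes through with no further work.
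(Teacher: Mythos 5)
Your proof takes a genuinely different route from the paper's. The paper establishes stability by proving that all $1$-types over any set $A$ are definable over $A$: after quantifier elimination (Proposition~\ref{qe}), a type $p$ is determined by the formulas $\sigma(x) = \tau(a)$ and $\sigma(x) \mathrel{E} \tau(a)$ it contains, and for each pair $(\sigma,\tau)$ the set of $a \in A$ for which such a formula lies in $p$ is, when nonempty, of the definable form $\{a : \tau(a) = \tau(a')\}$ or $\{a : \tau(a) \mathrel{E} \tau(a')\}$ for a single fixed $a' \in A$. You instead bound $|S_1(A)|$ and conclude $2^{\aleph_0}$-stability; this is the type-counting style the paper itself uses to deduce superstability in its earlier example. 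Both arguments rest on the same Proposition~\ref{qe}, so they are of comparable difficulty; the paper's route yields the stronger information that types are definable, while yours is more elementary in that it needs only cardinal arithmetic once the atomic formulas are enumerated.

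Two caveats. First, the sentence asserting that data (ii) consists of ``countably many choices from countable sets'' and hence contributes at most $2^{\aleph_0}$ possibilities is not correct as written: (ii), as you defined it, records which $\sigma(x)$ is $E$-related to which $\tau(a)$ with $a \in A$, and $A$ may be uncountable. The repair is the same observation you already made for (iii): the $E$-class of $\sigma(x)$ among those touching the definable closure of $A$ is a single choice per $\sigma$, so (ii) contributes at most $(|A|+\aleph_0)^{\aleph_0}$ options, which leaves the final bound at $2^{\aleph_0}$ whenever $|A| \leq 2^{\aleph_0}$; the conclusion is unaffected but the stated justification has to change. Second, the assertion that data (i)--(iii) determines the atomic fragment of $p$ is exactly what Proposition~\ref{qe} gives you once you have enumerated the atomic one-variable $L(A)$-formulas, as you did; Lemma~\ref{pairlem} is a structural tool used \emph{inside} the proof of quantifier elimination and is not needed again after Proposition~\ref{qe} is invoked, so citing it as carrying the load at this last step misplaces the dependency.
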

\begin{proof}
Let $M$ be a model of $T$, and let $A \subset M$ be a subset. We will show that all 1-types over $A$ are definable over $A$. Let $p \in S_1(A)$. By quantifier elimination in the language $L \cup \{C_{init},C_{init+1},\ldots\}$ (Proposition \ref{qe}), it suffices to show that the sets \[X_{\sigma,\tau} = \{a \in A : \mbox{``}\sigma(x) E \tau(a)\mbox{"} \in p\}\] and \[Y_{\sigma,\tau} = \{a \in A : \mbox{``}\sigma(x) = \tau(a)\mbox{"} \in p\}\] are definable over $A$ for all $\sigma, \tau \in \{f,g\}^{<\omega}$. The sets $Y_{\sigma, \tau}$ are either empty or of the form $\{a \in A : \tau(a) = \tau(a')\}$ for some fixed $a' \in A$, so they are definable over $A$. Similarly, the sets $X_{\sigma,\tau}$, if nonempty, are of the form $\{a \in A : \tau(a) E \tau(a')\}$ for some fixed $a' \in A$, which are definable over $A$ as well. Therefore all 1-types over $A$ are definable over $A$, which is one of the many equivalent conditions for the stability of $T$.
\end{proof}

By Proposition \ref{stable} (using the fact every stable theory is simple and low, see \cite[Remark 2.2]{buech}) and Proposition \ref{eqpairpseudo}, $T$ is a simple and low pseudofinite theory for which every pseudofinite ultraproduct satisfying $T$ fails to satisfy the GMS condition (A).

The rest of this section is spent proving quantifier elimination in $T$.

First we must expand our language $L$, adding unary predicates for the equivalence classes $C_{init}, C_{init+1},\ldots$ and $C_{fin}, C_{fin-1},\ldots$. These equivalence classes are $\emptyset$-definable in the language $\{f,g,E,=\}$, but the classes $C_{init+k}$ are not definable by quantifer-free formulas: the formula ``$C_{init}(x)$", for example, can be defined by the formula ``$\neg \exists z f(z) = x$". The final classes $C_{fin-k}$ are quantifier-free definable, but we include them anyway to make notation easier.

An outline of our argument is as follows. First, we define the property ``$\varphi$ has definable polynomial cardinality in $\theta$ over $R$", where $\varphi, \theta$ are formulas and $R$ is a subring of $\mathbb{R}^\star$. In Lemma \ref{standard}, we isolate some additional properties which, together with definable polynomial cardinalities, may be used to show quantifier elimination. 

We then define a class of formulas (the ``basic" formulas) and show that the hypotheses of Lemma \ref{standard} hold of all basic formulas (Lemma \ref{basic}). We use this to obtain this result for the wider class of ``intermediate" formulas (Lemma \ref{intermediate}), and then finally for certain conjunctions of atomic formulas and their negations (Lemma \ref{literals}), from which quantifier elimination follows (Proposition \ref{qe}).

Here we define what we mean by ``definable polynomial cardinality".

\begin{definition}
Let $(M,\mathbb{R}^\star)$ be a counting pair. Let $\varphi(\bar{x},\bar{y})$ and $\theta(\bar{z},\bar{y})$ be $L$-formulas, and let $R$ be a subring of $\mathbb{R}^\star$. Then we say $\varphi$ \emph{has (quantifier-free) definable polynomial cardinality in $\theta$ over $R$} if there are nonzero polynomials $F_1(X),\ldots,F_r(X)$ with coefficients from $R$ and (quantifier-free) formulas $\pi_1(\bar{y}),\ldots,\pi_r(\bar{y})$ so that

\begin{itemize}
\item If $M \models \pi_i(\bar{b})$ then $|\varphi(M^{|\bar{x}|},\bar{b})| = F_i(|\theta(M^{|\bar{z}|},\bar{b})|)$

\item If $M \models \bigwedge_i \neg \pi_i(\bar{b})$ then $\varphi(M^{|\bar{x}|},\bar{b}) = \emptyset$.
\end{itemize}
\end{definition}

Under certain additional assumptions, q.f.-definable polynomial cardinality can be used to obtain quantifier elimination results, via the following lemma.

\begin{lemma}
\label{standard}
Let $(M,\mathbb{R}^\star)$ be a counting pair. Suppose $\varphi$ has definable polynomial cardinality in $\theta$ over $\mathbb{R}$ (standard reals), witnessed by $F_1(X),\ldots,F_r(X)$ and $\pi_1(\bar{y}),\ldots,\pi_r(\bar{y})$. Suppose that whenever $M \models \pi_i(\bar{b})$ the set $\theta(M^{|\bar{z}|},\bar{b})$ is infinite. Then the formula $\exists x \varphi(\bar{x},\bar{y})$ is equivalent in $M$ to the formula $\bigvee_i \pi_i(\bar{y})$.
\end{lemma}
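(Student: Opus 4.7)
The plan is to prove both directions of the biconditional, with the forward direction being the content-bearing part. For the converse, if $M \models \bigwedge_i \neg \pi_i(\bar{b})$, then by the second clause in the definition of definable polynomial cardinality, we have $\varphi(M^{|\bar{x}|},\bar{b}) = \emptyset$, so $\exists \bar{x}\,\varphi(\bar{x},\bar{b})$ fails. For the forward direction, suppose $M \models \pi_i(\bar{b})$ for some $i$. Then by the first clause, $|\varphi(M^{|\bar{x}|},\bar{b})| = F_i(|\theta(M^{|\bar{z}|},\bar{b})|)$. Since $|\varphi(M^{|\bar{x}|},\bar{b})|$ is a nonnegative hyperinteger and equals $F_i(|\theta(M^{|\bar{z}|},\bar{b})|)$, it suffices to show that this value is nonzero.

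The key step is the following observation about polynomials: if $F(X) \in \mathbb{R}[X]$ is a nonzero polynomial and $N \in \mathbb{R}^\star$ is an infinite hyperreal (i.e., $|N| > n$ for every standard $n \in \mathbb{N}$), then $F(N) \neq 0$. Indeed, writing $F(X) = a_d X^d + \ldots + a_0$ with $a_d \neq 0$, we have $F(N) = a_d N^d (1 + \frac{a_{d-1}}{a_d} N^{-1} + \ldots + \frac{a_0}{a_d} N^{-d})$; since $N$ is infinite the parenthetical factor is infinitely close to $1$, and $a_d N^d$ is an infinite nonzero hyperreal, so $F(N)$ is in fact infinite (in particular, nonzero). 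Equivalently, $F$ has only finitely many real roots $r_1,\ldots,r_d$, all of which satisfy $|r_j| < n$ for some standard $n$, so no infinite hyperreal can be a root of $F$.

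Applying this observation: by hypothesis $\theta(M^{|\bar{z}|},\bar{b})$ is infinite, so its pseudofinite cardinality $|\theta(M^{|\bar{z}|},\bar{b})| \in \mathbb{Z}^\star$ is an infinite hypernatural (it exceeds every standard $n$, since being infinite in the home sort is expressed by the $L^+$-schema $f_{\theta}(\bar b) > n$ for each $n$). Since $F_i$ is a nonzero polynomial with standard real coefficients, $F_i(|\theta(M^{|\bar{z}|},\bar{b})|) \neq 0$, whence $|\varphi(M^{|\bar{x}|},\bar{b})| \neq 0$, so $\varphi(M^{|\bar{x}|},\bar{b})$ is nonempty and $M \models \exists \bar{x}\,\varphi(\bar{x},\bar{b})$.

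There is no real obstacle here; the only subtle point is the need for the coefficients of the $F_i$ to be \emph{standard} reals, which is exactly what lets us conclude that the infinite hyperreal $|\theta(M^{|\bar{z}|},\bar{b})|$ cannot be a root of any $F_i$. If one merely had coefficients in $\mathbb{R}^\star$, the polynomial could have infinite roots and the argument would break down; this explains the hypothesis ``over $\mathbb{R}$'' in the statement.
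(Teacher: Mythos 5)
Your argument is correct and matches the paper's proof essentially line for line: DeMorgan for the reverse direction, and for the forward direction the observation that a nonzero polynomial with standard real coefficients has no infinite hyperinteger root, so $F_i(|\theta(M^{|\bar z|},\bar b)|)\neq 0$. Your extra unpacking of why infinite hyperreals cannot be roots (leading-term dominance, or equivalently the finite bound on the standard roots) is just a fuller version of the same sentence in the paper.
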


\begin{proof}
By DeMorgan's law, if $M \models \neg \bigvee_i \pi_i(\bar{b})$ then $M \models \neg \exists \bar{x} \varphi(\bar{x},\bar{b})$.

If $M \models \pi_i(\bar{b})$ for some $i$, then by assumption $|\theta(M^{|\bar{z}|},\bar{b})|$ is an infinite hyperinteger. The polynomial $F_i$ is nonzero and has standard coefficients, so therefore $F_i(|\theta(M^{|\bar{z}|},\bar{b})|)$ is nonzero -- for no nonzero polynomial with standard coefficients has an infinite hyperinteger as a root, since all roots in $\mathbb{R}$ and therefore in $\mathbb{R}^\star$ are bounded by a finite integer. Therefore $|\varphi(M^{|\bar{x}|},\bar{b})| \neq 0$, whence $M \models \exists \bar{x} \varphi(\bar{x},\bar{b})$.
\end{proof}

Next we define our family of \emph{basic} formulas.

\begin{definition}
Let $k$ be an integer. A \emph{$k$-basic} formula $\phi(x,y_0,\ldots,y_{m-1})$ is a formula of the form $\bigwedge_{i = 0}^{m-1} s_i(x) = y_i$, where $(s_0,\ldots,s_{m-1})$ is an $m$-tuple of strings $s_i \in \{f,g\}^k$.
\end{definition}

In the following lemma, we show that basic formulas have quantifier-free definable polynomial cardinality over a formula which satisfies the conditions of Lemma \ref{standard}.

\begin{lemma}
\label{basic}
Let $\phi(x,\bar{y})$ be a $k$-basic formula. Then $\phi$ has q.f.-definable polynomial cardinality in the formula ``$z E y_0$" over $\mathbb{Z}$.
\end{lemma}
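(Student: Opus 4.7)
The plan is to partition $M^{|\bar y|}$ into finitely many quantifier-free definable pieces and, on each, express $|\phi(M,\bar b)|$ as a fixed nonzero polynomial in $|E(M,b_0)|$ with integer coefficients. Parameter tuples $\bar b$ for which $\phi(M,\bar b)=\emptyset$ are absorbed by the ``no $\pi_i$ satisfied'' clause of the definition. The key input is Lemma~\ref{pairlem}, which completely controls the number of ``coordinate-wise'' preimages inside each equivalence class.

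First I would isolate the quantifier-free necessary conditions for $\phi(M,\bar b)\neq\emptyset$. Since any $a$ with $s_i(a)=b_i$ forces $[s_i(a)]_E = S^k([a]_E)$ to be independent of $i$, we need $b_i E b_0$ for all $i$; we need $b_i = b_j$ whenever $s_i=s_j$ (a purely syntactic coincidence); and we need $[b_0]_E \notin \{C_{init+j} : j<k\}$ so that $S^{-k}[b_0]_E$ is nonempty. Assuming these three conditions hold, let $S\subseteq\{f,g\}^k$ enumerate the distinct strings appearing and set $m'=|S|$.

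Then I would split by whether $[b_0]_E = C_{fin}$, a quantifier-free distinction. In the generic case $[b_0]_E \neq C_{fin}$, Lemma~\ref{pairlem} applies directly to $C=[b_0]_E$: each of the $|[b_0]_E|^{2^k - m'}$ extensions of our $m'$-partial assignment to a total function $\{f,g\}^k \to [b_0]_E$ yields a unique $a$, so $|\phi(M,\bar b)| = |E(M,b_0)|^{2^k - m'}$. In the case $[b_0]_E = C_{fin}$, $a$ may lie in any of $C_{fin}, C_{fin-1},\ldots,C_{fin-k}$; for $a\in C_{fin-k'}$ the constraint $s_i(a)=b_i$ collapses to $\tau_i(a)=b_i$, where $\tau_i$ is the final $k'$-segment of $s_i$, so by the $C_{fin}$ clause of Lemma~\ref{pairlem} the contribution from $C_{fin-k'}$ is $|C_{fin}|^{2^{k'}-m'_{k'}}$ (with $m'_{k'}$ the number of distinct $\tau_i$'s) precisely when the quantifier-free condition ``$\tau_i = \tau_j \Rightarrow b_i = b_j$'' holds, and $0$ otherwise. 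Refining the partition by the resulting subset $S_{\bar b}\subseteq\{0,\ldots,k\}$ of consistent levels, the polynomial on each piece is $\sum_{k'\in S_{\bar b}} X^{2^{k'}-m'_{k'}} \in \mathbb{Z}[X]$, which is nonzero because the syntactic consistency assumption above forces $k \in S_{\bar b}$, contributing at least the monomial $X^{2^k - m'}$.

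The main obstacle is the bookkeeping in the $[b_0]_E=C_{fin}$ case: unlike the single-monomial answer in the generic case, the cardinality here is a sum of monomials whose support depends on $\bar b$ through a Boolean combination of q.f.\ consistency formulas, one for each $k'\leq k$. Since there are only finitely many such possible supports, however, the resulting family of $(\pi_i, F_i)$ is finite as required.
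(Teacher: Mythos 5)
Your proof is correct and follows essentially the same approach as the paper: isolate quantifier-free necessary conditions $\Psi(\bar y)$, split on whether $[b_0]_E = C_{fin}$, and use Lemma~\ref{pairlem} to count solutions by freely choosing the unspecified ``coordinates,'' giving a single monomial $X^{2^k-|S|}$ in the generic case and, in the $C_{fin}$ case, a sum $\sum_{k'} X^{2^{k'}-m'_{k'}}$ over the levels $C_{fin-k'}$ whose quantifier-free collapse-consistency condition holds. If anything your bookkeeping in the $C_{fin}$ case is stated more carefully than the paper's, which has a few typographical slips there (``length-$l$ prefix'' where a final segment is meant, and the base and exponent of the displayed bijection), but the underlying argument is the same.
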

\begin{proof}
Let $\phi(x,\bar{y})$ be $\bigwedge_{i=0}^{m-1} s_i(x) = y_i$, where each $s_i$ is a string in $\{f,g\}^k$.

 If $M \models \phi(a,\bar{b})$, then the following must hold:

\begin{itemize}

\item $b_i E b_j$ for all $i,j < m$ (since each string $s$ has the same length),

\item $b_i = b_j$ whenever $s_i = s_j$, and

\item $b_i \notin C_{init+l}$ for $i < m$ and $l < k$.

\end{itemize}

Let $\Psi(\bar{y})$ be the conjunction of the above conditions. 

Suppose $M \models \Psi(\bar{b})$. Let $S$ be the set of strings $\{s_i : i < m\}$. Suppose first that $b_0 \notin C_{fin}$ (and so $b_i \notin C_{fin}$, since all $b_i$ are $E$-equivalent as per $\Psi$). Then $\phi(M,\bar{b})$ is in definable bijection with $[b_0]_E^{2^k - |S|}$. To see this, we consider elements of this latter set as tuples indexed by strings in $\{f,g\}^k \setminus S$, and we pair $(c_s)_{s \notin S}$ with the unique $a$ such that $s_i(a) = b_i$ for $i < m$ and $s(a) = c_s$ for $s \notin S$ -- such an $a$ exists and is unique by Lemma \ref{pairlem}. We note that in this case, $\phi(M,\bar{b})$ is a subset of $f^{-k}([b_0]_E)$.

If $b_0 \in C_{fin}$ then, as in the proof of Lemma \ref{pairlem}, the situation is slightly more complicated. We have $\phi(M,\bar{b}) \subset C_{fin} \cup \ldots \cup C_{fin-k}$.  The set $\phi(M,\bar{b})$ has nonempty intersection with $C_{fin - l}$ precisely when $\bar{b}$ satisfies the following condtition: if $s, s' \in S$ and the length-$l$ prefix of $s_i$ equals the length-$l$ prefix of $s_j$, then $b_i = b_j$. Let $\rho_l(\bar{y})$ be a quantifier-free formula expressing this condition on $\bar{b}$. When $M \models \rho_l(\bar{b})$, the intersection $\phi(M,\bar{b}) \cap C_{fin_l}$ is in definable bijection with $C_{fin-l}^{2^n - |S'|}$, where $S'$ is the set of all length-$k$ prefixes of elements of $S$. We note also that $C_{fin-l}$ is in definable bijection with $C_{fin}^{2^l}$ by the same argument used in this proof, and so $\phi(M,\bar{b}) \cap C_{fin-l}$ is in definable bijection with a power of $C_{fin}$.

Therefore $\phi(M,\bar{b})$ in definable bijection with a power of $[b_0]_E$ or (if $[b_0]_E$ is $C_{fin}$) with one of finitely many disjoint unions of powers of $C_{fin}$, all determined by quantifier-conditions on $\bar{b}$. Since definable bijections preserve pseudofinite cardinality, we obtain that $\phi$ has qf-definable polynomial cardinality in the formula ``$z E y_0$" over $\mathbb{Z}$. 

\end{proof}

A brief modification of the above proof gets us a similar result for a wider class of formulas.

\begin{lemma} 
\label{basicplus}
Let $\phi(x,\bar{y}) := \bigwedge_i s_i(x) = y_i$ be a $k$-basic formula and let $\psi(x)$ be a conjunction of formulas $s(x) = t(x)$ with $s,t \in \{f,g\}^k$. Then $\varphi(x,\bar{y}) := \phi(x,\bar{y}) \wedge \psi(x)$ has qf-definable polynomial cardinality in ``$z E y_0$" over $\mathbb{Z}$.
\end{lemma}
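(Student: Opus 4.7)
The plan is to adapt the proof of Lemma \ref{basic} almost verbatim, encoding the additional $\psi$-conjuncts as an equivalence relation on string coordinates. Write $\phi(x,\bar{y}) = \bigwedge_{i<m} s_i(x) = y_i$, and let $\sim_\psi$ denote the equivalence relation on $\{f,g\}^k$ generated by those pairs $(s,t)$ for which ``$s(x) = t(x)$'' is a conjunct of $\psi$. Under the coordinatization $a \mapsto (s(a))_{s \in \{f,g\}^k}$ supplied by Lemma \ref{pairlem}, an element $a$ (lying in an equivalence class avoiding $C_{init}, \ldots, C_{init+k-1}$) satisfies $\varphi(x,\bar{b})$ precisely when its coordinate tuple is constant on each $\sim_\psi$-class and takes the value $b_i$ on the $\sim_\psi$-class of $s_i$.

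From this description I read off the consistency formula $\Psi(\bar{y})$ asserting: $y_i E y_j$ for all $i,j$; $y_i = y_j$ whenever $s_i \sim_\psi s_j$; and $y_0 \notin C_{init+l}$ for $l < k$. If $\bar{b}$ fails $\Psi$ then $\varphi(M,\bar{b}) = \emptyset$. If $\Psi(\bar{b})$ holds and $b_0 \notin C_{fin}$, Lemma \ref{pairlem} yields a definable bijection between $\varphi(M,\bar{b})$ and $[b_0]_E^{N - |S|}$, where $N$ is the number of $\sim_\psi$-classes in $\{f,g\}^k$ and $S$ is the set of such classes containing some $s_i$. Hence $|\varphi(M,\bar{b})| = |\{z : z E y_0\}(M,\bar{b})|^{N-|S|}$, a monomial in $\mathbb{Z}[X]$.

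The main obstacle is the $C_{fin}$ case, which I handle by stratifying over $C_{fin}, C_{fin-1}, \ldots, C_{fin-k}$ as in Lemma \ref{basic}, but with $\sim_\psi$ folded in. For each $0 \leq l \leq k$, let $\sim_l$ denote the equivalence relation on $\{f,g\}^k$ generated by $\sim_\psi$ together with all pairs of strings sharing the same length-$l$ suffix; this is exactly the equivalence of coordinates forced equal on any element of $C_{fin-l}$. Let $\rho_l(\bar{y})$ be the quantifier-free formula asserting $y_0 \in C_{fin}$ and $y_i = y_j$ whenever $s_i \sim_l s_j$. A direct counting argument, using Lemma \ref{pairlem} together with the definable bijection $C_{fin-l} \cong C_{fin}^{2^l}$ from that same lemma, shows that when $\rho_l(\bar{b})$ holds, $\varphi(M,\bar{b}) \cap C_{fin-l}$ is in definable bijection with $C_{fin}^{p_l}$ for an integer $p_l$ determined by $\sim_l$ and the $s_i$; when $\rho_l(\bar{b})$ fails the intersection is empty.

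Because $\sim_l$ refines as $l$ grows, the conditions $\rho_l$ weaken in $l$, so the set $\{l : \rho_l(\bar{b}) \text{ holds}\}$ is an upper interval $[l_{\min}(\bar{b}), k]$ whose left endpoint is determined by a finite Boolean combination of the $\rho_l$'s (hence quantifier-free definable). On the piece defined by ``$\rho_l$ holds and $\rho_{l-1}$ fails'' (with $\rho_{-1}$ taken to be false), the total cardinality is the integer polynomial $\sum_{l' = l}^{k} X^{p_{l'}}$ evaluated at $|\{z : z E y_0\}(M,\bar{b})| = |C_{fin}|$. Combining these finitely many pieces with the monomial from the non-$C_{fin}$ case yields the required list of nonzero polynomials $F_j \in \mathbb{Z}[X]$ and quantifier-free formulas $\pi_j(\bar{y})$ witnessing definable polynomial cardinality of $\varphi$ in ``$z E y_0$'' over $\mathbb{Z}$.
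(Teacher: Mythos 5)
Your argument is correct and follows essentially the same route as the paper's proof: both reduce $\psi$ to an equivalence relation on $\{f,g\}^k$, identify the solution set outside $C_{fin}$ with a power of $[b_0]_E$ via Lemma \ref{pairlem}, and handle the $C_{fin}$ case by stratifying over the classes $C_{fin-l}$. You spell out the $C_{fin}$ stratification (the coarsenings $\sim_l$, the nested conditions $\rho_l$, and the resulting interval structure) more explicitly than the paper, which simply defers to ``as before,'' but it is the same argument rather than a different one.
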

\begin{proof}
Without loss of generality, $\psi(x)$ is of the form $\bigwedge_{t \sim t'} t(x) \sim t'(x)$, where $\sim$ is some equivalence relation on $\{f,g\}^k$. Now, $\varphi(M,\bar{b})$ is nonempty if and only if $\phi(M,\bar{b})$ is nonempty and $\bar{b}$ additionally satisfies the property that $b_i = b_j$ whenever $s_i \sim s_j$. Let $[t_0]_\sim,\ldots,[t_{l-1}]_\sim$ be the equivalence classes, ordered so that for some $j \leq l$ we have $\{[s_i]_\sim : i < m\} = \{[t_j]_\sim, \ldots, [t_{l-1}]_\sim\}$. Assume that all elements of $\bar{b}$ are from the same equivalence class $C$ (or else $\phi(M,\bar{b})$ is empty). Then, as in Lemma \ref{basic}, the set $\varphi(M,\bar{b})$ is in definable bijection with $C^j$ if $C \neq C_{fin}$, as we may freely choose elements of $C$ for each equivalence class of strings $[t]_\sim$ which is not specified by a formula $s_i(x) = y_i$. If $C = C_{fin}$ then as before, $\varphi(M,\bar{b})$ is in definable bijection with one of finitely many disjoint unions of powers of $C$, depending on some quantifier-free condition on $\bar{b}$.
\end{proof}

\begin{definition}
A \emph{$k$-intermediate} formula $\phi(x,y_1,\ldots,y_m)$ is a conjunction of formulas of the form $s(x) = s'(x)$ and $s(x) = t(y_i)$, where $s, s'$ and $t$ are all strings in $\{f,g\}^{<\omega}$, with each string $s$ and $s'$ having length $k$. 
\end{definition}

In this definition, we do not require every $\bar{y}$ variable to occur in a conjunct, and we do not require that both types of conjunct appear. We do not consider the empty conjunct to be $k$-intermediate, but we note that it is equivalent to the $k$-intermediate formula ``$x = x$".

Equivalently, a $k$-intermediate formula is a formula $\varphi(x,\bar{y})$ of the form $\psi(x)$ or the form $\phi(x,t_1(y_{i_1}),\ldots,t_n(y_{i_n})) \wedge \psi(x)$, where $\phi(x,y_1,\ldots,y_n)$ is a $k$-basic fomrula and $\psi(x)$ is a conjunction of formulas $s(x) = s'(x)$, with each $s, s'$ being a string of length $k$. The fact that this is equivalent depends on the function symbols in $L$ being unary, so that every $L$-term is of the form $t(v)$ for some variable $v$ and some string $t \in \{f,g\}^{<\omega}$.

The way we prove that certain conjunctions of atomic formulas have definable polynomial cardinality is by splitting the conjunctions into a part in the language $\{f,g,=\}$, and a part in the language $L$ without equality. The latter we call \emph{equivalence formulas}.

\begin{definition}
\label{eqdef}
An \emph{equivalence formula} is an $L$-formula which does not use the equality symbol.
\end{definition}

Atomic equivalence formulas are $C_{init+i}(v), C_{fin-i}(v)$ and $s(v) E t(w)$" with a natural number $i \in \omega$, strings $s,t, \in \{f,g\}^{<\omega}$, and variables $v,w$. An easy induction on formula length shows that if $\phi(\bar{x})$ is an equivalence formula and $\bar{a}, \bar{a}' \in M^{|\bar{x}|}$ with $a_i E a'_i$ for each $i$, then $M \models \phi(\bar{a})$ if and only if $M \models \phi(\bar{a}')$. In fact, a stronger phenomenon occurs.

Recall the language $L^\star = \{c_{init},c_{fin},S)$ and $T^\star$ (Definition \ref{tstar}). Recall also the remarks after that definition, that we may define $M^\star$, a model of $T^\star$, on the set of equivalence classes $[x]_E$ by interpreting $c_{init}$ as $C_{init}$, $c_{fin}$ as $C_{fin}$, and letting $S([a]_E)$ be $[f(a)]_E$. 
\begin{lemma}
\label{bi-interp}
For every equivalence formula $\phi(\bar{x})$ there is an $L^\star$-formula $\phi^\star(\bar{x})$ so that for all $\bar{a} \in M^{n}$, we have $M \models \phi(a_0,\ldots,a_{n-1})$ if and only if $M^\star \models \phi^\star([a_0]_E,\ldots,[a_{n-1}]_E)$.
\end{lemma}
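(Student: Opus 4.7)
The plan is to proceed by induction on the complexity of $\phi$, defining the translation $\phi \mapsto \phi^\star$ recursively. The key idea is that every ``equivalence-level'' piece of information about tuples in $M$ is already visible in the quotient structure $M^\star$, because $f$ and $g$ both project down to $S$ on equivalence classes, and the two distinguished classes $C_{init}$ and $C_{fin}$ are named by the constants $c_{init}$ and $c_{fin}$.

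I would begin with the atomic case. The atomic equivalence formulas are of three shapes. For $C_{init+i}(v)$, set $\phi^\star(v) := (v = S^i(c_{init}))$; indeed $C_{init+k} = f^k(C_{init})$, and on the quotient this is exactly the $k$-fold iterate of $S$ applied to $c_{init}$. For $C_{fin-i}(v)$, set $\phi^\star(v) := (S^i(v) = c_{fin}) \wedge \neg(S^{i-1}(v) = c_{fin})$ when $i \geq 1$, and $\phi^\star(v) := (v = c_{fin})$ when $i = 0$; this matches the definition $C_{fin-k} = f^{-k}(C_{fin}) \setminus f^{-(k-1)}(C_{fin})$ once one passes to the quotient. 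For $s(v) E t(w)$, set $\phi^\star(v,w) := (S^{|s|}(v) = S^{|t|}(w))$; here I use axiom (b) of $T_0$, which tells us that the projections of $f$ and $g$ to $M/E$ coincide, so $[s(a)]_E = S^{|s|}([a]_E)$ regardless of which string $s \in \{f,g\}^{|s|}$ we choose. In each case the biconditional $M \models \phi(\bar a) \iff M^\star \models \phi^\star([\bar a]_E)$ is immediate from the axioms of $T$ and the definition of $M^\star$.

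For the inductive step, the propositional connectives commute with $(\cdot)^\star$ trivially: set $(\phi \wedge \psi)^\star = \phi^\star \wedge \psi^\star$, $(\neg \phi)^\star = \neg \phi^\star$, and similarly for $\vee$. The only step requiring a genuine argument is the existential quantifier: given $\phi(\bar x) = \exists y\, \psi(y, \bar x)$, I would set $\phi^\star(\bar x) := \exists z\, \psi^\star(z, \bar x)$. The forward direction is easy: a witness $b \in M$ yields the witness $[b]_E \in M^\star$. For the backward direction, given a witness $z \in M^\star$, I need some $b \in M$ with $[b]_E = z$ and $M \models \psi(b, \bar a)$; since equivalence classes in a model of $T$ are nonempty (each class is in fact infinite by axiom schema (i)), any representative $b$ of the class $z$ will do, and by the inductive hypothesis the truth value of $\psi(b, \bar a)$ depends only on $([b]_E, [\bar a]_E) = (z, [\bar a]_E)$, so $M \models \psi(b, \bar a)$.

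The main thing to verify carefully is precisely this last point: that the truth value of an equivalence formula on a tuple in $M$ depends only on the $E$-classes of the entries. This is really the content of the whole lemma, and strictly speaking it should be proved by a parallel induction (or subsumed into the main induction) alongside the construction of $\phi^\star$ — the atomic case is immediate from the axioms, and each inductive step preserves this $E$-invariance. Once this is in place, the quantifier case above goes through cleanly, and the lemma follows.
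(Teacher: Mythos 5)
Your proposal is correct and takes essentially the same approach as the paper: induct on formula complexity, translate the three atomic shapes $C_{init+i}$, $C_{fin-i}$, and $s(v)\,E\,t(w)$ exactly as you do, and observe that connectives and quantifiers lift directly. (A small note: the $E$-invariance you flag at the end needn't be a parallel induction — once the biconditional is established for $\psi$ by the inductive hypothesis, $E$-invariance of $\psi$ is an immediate consequence, which is all the quantifier step requires.)
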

\begin{proof}
If $\phi(x)$ is $C_{init+k}(x_i)$, let $\phi^\star(x_i)$ be $S^k(c_{init})$.

If $\phi(x_i)$ is $C_{fin-k}(x_i)$, let $\phi^\star(x_i)$ be $S^k(x_i) = c_{fin} \wedge S^{k-1}(x_i) \neq c_{fin}$.

If $\phi(\bar{x})$ is $s(x_i) E t(x_j)$, let $\phi^\star(x_i)$ be $S^{|s|}(x_i) = S^{|t|}(x_j)$.

Boolean connectives and quantifiers pass up directly.
\end{proof}

As a consequence of this lemma, we obtain

\begin{lemma}
\label{qelight}
Let $\Phi(x,\bar{y})$ be a quantifier-free equivalence formula. Then the formula $\exists x \Phi(x,\bar{y})$ is equivalent to a quantifier-free formula $\theta(\bar{y})$.
\end{lemma}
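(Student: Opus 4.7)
The plan is to reduce the problem to the known quantifier elimination of $T^\star$, using the bi-interpretation from Lemma \ref{bi-interp}. First, I would use that lemma to associate to $\Phi(x,\bar{y})$ the $L^\star$-formula $\Phi^\star(x^\star,\bar{y}^\star)$ such that $M \models \Phi(a,\bar{b})$ iff $M^\star \models \Phi^\star([a]_E,[\bar{b}]_E)$. Since each $E$-class is infinite (axiom (i) of $T$) and therefore nonempty, the existential quantifier transports faithfully: $M \models \exists x\, \Phi(x,\bar{b})$ iff $M^\star \models \exists x^\star\, \Phi^\star(x^\star,[\bar{b}]_E)$. The forward direction is immediate from Lemma \ref{bi-interp}; for the backward direction, if some class $[c]_E$ witnesses $\Phi^\star(\cdot,[\bar{b}]_E)$ in $M^\star$, then any element $a \in [c]_E \subseteq M$ witnesses $\Phi(\cdot,\bar{b})$ in $M$.

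Next, because $T^\star$ admits quantifier elimination (as noted after Definition \ref{tstar}), the formula $\exists x^\star\, \Phi^\star(x^\star,\bar{y}^\star)$ is equivalent in $M^\star$ to a quantifier-free $L^\star$-formula $\theta^\star(\bar{y}^\star)$. I would then translate $\theta^\star$ back to a quantifier-free equivalence $L$-formula $\theta(\bar{y})$ on $M$. Each atomic $L^\star$-formula has a quantifier-free $L$-pullback on classes: $S^k(y^\star) = S^l(z^\star)$ becomes $f^k(y) \, E \, f^l(z)$; $y^\star = S^k(c_{init})$ becomes $C_{init+k}(y)$; $S^k(y^\star) = c_{fin}$ becomes $\bigvee_{j=0}^{k} C_{fin-j}(y)$; and Boolean combinations pass through unchanged. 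The resulting $\theta(\bar{y})$ is quantifier-free in $L$ (extended with the unary predicates for the exceptional equivalence classes) and, by construction, is equivalent to $\exists x\, \Phi(x,\bar{y})$ in $M$.

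The main obstacle is bookkeeping: verifying that every shape of atomic $L^\star$-formula occurring in $\theta^\star$ admits a quantifier-free pullback to $L$. This is not serious, since $L^\star$ has only one function symbol $S$, two constants $c_{init}, c_{fin}$, and equality, so atomic $L^\star$-formulas are (after normalizing with the identity $S(c_{fin}) = c_{fin}$) of one of the forms listed above. The predicates $C_{init+k}$ and $C_{fin-k}$ were added to the language of $T$ precisely so that these pullbacks stay quantifier-free, so no further work is needed beyond the case analysis.
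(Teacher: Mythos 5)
Your proposal is correct and follows essentially the same route as the paper: translate to $T^\star$ via Lemma \ref{bi-interp}, apply quantifier elimination for $T^\star$, and pull the resulting quantifier-free $L^\star$-formula back to $L$. In fact, your explicit case analysis showing that each atomic $L^\star$-formula pulls back to a quantifier-free $L$-formula in the expanded language (using the predicates $C_{\mathrm{init}+k}$ and $C_{\mathrm{fin}-k}$) is the step the paper's proof passes over silently, and it is exactly what is needed to conclude that the pullback $\theta(\bar{y})$ is quantifier-free.
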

\begin{proof}
Let $\Phi^\star(x,\bar{y})$ be the $L^\star$ formula as in Lemma \ref{bi-interp}. Since the theory $T^\star$ has quantifier elimination, the formula $\exists x \Phi^\star(x,\bar{y})$ is equivalent in $T^\star$ to a quantifier-free formula $\theta^\star(\bar{y})$. Via the interpretation of $M^\star$ in $M$, there is an $L$-formula $\theta(\bar{y})$ so that $M \models \theta(b_0,\ldots,b_{m-1})$ if and only if $M^\star \models \theta^\star([b_0]_E,\ldots,[b_{m-1}]_E)$. This  happens if and only $M^\star \models \exists x \Phi^\star(x,[b_0]_E,\ldots,[b_{m-1}]_E)$, and that happens if and only if $M \models \exists x \Phi(x,b_0,\ldots,b_{m-1})$.
\end{proof}

We now show that intermediate formulas satisfy the hypotheses of Lemma \ref{standard}. We also demonstrate that sets definable by intermediate formulas intersect equivalence classes in a definable way, which will be used when we incorporate equivalence formulas into our analysis of formulas with definable polynomial cardinality, as per the remarks before Definition \ref{eqdef}.

\begin{lemma}
\label{intermediate}
Let $\phi(x,\bar{y})$ be a $k$-intermediate formula with a conjunct of the form $s(x) = t(y_i)$. Then $\phi$ has q.f.-definable polynomial cardinality in the formula ``$z E t(y_i)$".

Furthermore, $\phi(x,\bar{b})$ is a subset of $f^{-k}([t(b_i)]_E)$, and there are quantifier-free formulas $\eta_1(\bar{y}), \ldots, \eta_p(\bar{y})$ in the language $\{f,g,=\}$ and numbers $j_1, \ldots, j_p \in \{0,\ldots,k\}$ so that 

\begin{itemize}
\item If $M \models \exists x \phi(x,\bar{b})$ and $t(b_i) \in C_{fin}$ then $M \models \eta_i(\bar{b})$ for some $i$, and $\phi(M,\bar{b})$ intersects exactly the equivalence classes $C_{fin-k}, \ldots, C_{fin-j_i}$.
\end{itemize}
\end{lemma}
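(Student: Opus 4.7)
My plan is to reduce to Lemma~\ref{basicplus} via the reformulation of intermediate formulas noted right after their definition. I will first rewrite $\phi(x,\bar{y})$ as $\phi_0(x, t_1(y_{i_1}),\ldots, t_n(y_{i_n})) \wedge \psi(x)$ where $\phi_0(x,\bar{u})$ is a $k$-basic formula in fresh variables $\bar{u}$, each $t_j \in \{f,g\}^{<\omega}$, and $\psi(x)$ is a conjunction of equalities $s(x)=s'(x)$ with $|s|=|s'|=k$. Reordering so that the distinguished conjunct $s(x)=t(y_i)$ becomes $s_1(x)=u_1$ with the intended substitution $u_1 \mapsto t(y_i)$, I then apply Lemma~\ref{basicplus} to $\phi_0 \wedge \psi$ (viewed as a formula in $(x,\bar{u})$) to obtain quantifier-free formulas $\pi_1(\bar{u}),\ldots,\pi_r(\bar{u})$ and polynomials $F_i \in \mathbb{Z}[X]$ witnessing q.f.-definable polynomial cardinality in the formula $z E u_1$. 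Substituting $u_j \mapsto t_j(y_{i_j})$ throughout carries quantifier-free formulas to quantifier-free formulas, preserves the polynomials $F_i$, and sends $|z E u_1|$ to $|z E t(y_i)|$, yielding the first claim.

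For the subset statement, if $a \in \phi(M,\bar{b})$ then $s(a) = t(b_i)$ with $|s|=k$; since $f$ and $g$ both induce the successor function $S$ on $M/E$, I get $S^k([a]_E) = [t(b_i)]_E$, so $a \in f^{-k}([t(b_i)]_E)$ as required.

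For the $C_{fin}$ case, assume $t(b_i) \in C_{fin}$, so by the subset statement $\phi(M,\bar{b}) \subseteq C_{fin-0} \cup \cdots \cup C_{fin-k}$. I extract from the proof of Lemma~\ref{basicplus} that whether $\phi(M,\bar{b}) \cap C_{fin-l}$ is nonempty is governed by a quantifier-free condition $\rho_l(\bar{y})$ in $\{f,g,=\}$: group the length-$k$ strings appearing in $\phi_0$ and $\psi$ by the equivalence relation generated by ``sharing the final $l$ characters'' together with the $\psi$-pair identifications, and require that whenever two $\phi_0$-constraints $s_\alpha(x)=t_\alpha(y_{i_\alpha})$ and $s_\beta(x)=t_\beta(y_{i_\beta})$ have $s_\alpha, s_\beta$ equivalent under this relation, the corresponding equality $t_\alpha(y_{i_\alpha})=t_\beta(y_{i_\beta})$ must hold. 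Since ``sharing the final $l$ characters'' becomes strictly stricter as $l$ grows, the induced equivalence relation becomes finer and hence $\rho_l$ becomes weaker, yielding the chain $\rho_0 \Rightarrow \rho_1 \Rightarrow \cdots \Rightarrow \rho_k$. Thus the set of $l$'s with $\phi(M,\bar{b}) \cap C_{fin-l} \neq \emptyset$ is upward-closed, and defining $\eta_j(\bar{y}) := \rho_j(\bar{y}) \wedge \neg \rho_{j-1}(\bar{y})$ (with $\rho_{-1}$ vacuously false) produces mutually exclusive quantifier-free formulas indexed by a threshold $j_i \in \{0,\ldots,k\}$, each of which forces $\phi(M,\bar{b})$ to meet exactly the classes $C_{fin-k}, C_{fin-(k-1)}, \ldots, C_{fin-j_i}$.

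The hard part will be verifying that the $\psi$-conjuncts integrate correctly into the $\rho_l$'s while preserving monotonicity, and that $\rho_l$ is expressible using only equalities of $\{f,g\}$-terms applied to $\bar{y}$ (no $E$ or equivalence-class predicates). Both should follow from the observation already exploited in the proof of Lemma~\ref{pairlem}, namely that on $C_{fin-l}$ the value of any length-$k$ string depends only on its final $l$ characters; thus a $\psi$-conjunct $s(x)=s'(x)$ becomes an equation between ``free coordinates'' of $x$ under the Lemma~\ref{pairlem} coordinatization, which simply merges two equivalence classes of strings in exactly the way the construction of $\rho_l$ anticipates.
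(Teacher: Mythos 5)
Your proof follows the paper's approach exactly: decompose the $k$-intermediate formula into a $k$-basic formula at substituted terms $t_j(y_{i_j})$ conjoined with a conjunction $\psi(x)$ of equalities $s(x)=s'(x)$, invoke Lemma~\ref{basicplus}, and read the $C_{fin}$-class intersection pattern off the coordinatization from the proof of Lemma~\ref{basic}. Your explicit construction of the monotone chain $\rho_0 \Rightarrow \cdots \Rightarrow \rho_k$ and the threshold formulas $\eta_j := \rho_j \wedge \neg \rho_{j-1}$ correctly fills in what the paper dispatches with the terse ``follows directly from the proof of Lemma~\ref{basic}.''
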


\begin{proof}
The formula $\phi(x,\bar{y})$ is equal to $\varphi(x,t_0(y_{i_0}),\ldots,t_{l-1}(y_{i_{l-1}})) \wedge \psi(x)$, where $\varphi$ is $k$-basic and $\psi$ is $k$-intermediate in the single variable $x$. We may arrange the variables in $\varphi$ so that $t_0(y_{i_0})$ is the term $t(y_i)$ assumed in the hypotheses of the lemma. Then the first statement of the lemma follows directly from Lemma \ref{basicplus}.

For the second statement of the lemma, we note that if $a \in \phi(M,\bar{b})$, then $M \models s(a) = t(b_i)$ and so $a \in f^{-|s|}([t(b_i)]_E)$. The rest of the lemma follows directly from the proof of Lemma \ref{basic}.
\end{proof}

In order to turn Lemma \ref{intermediate} into our general quantifier-elimination argument, we use the following syntactical lemma.

\begin{lemma}
\label{intermedform}
Let $\phi(x,\bar{y})$ be a conjunction of atomic $L$-formulas, each of which contains the variable $x$. Let $k$ be any number larger than the length of the largest $\{f,g\}$-string appearing in a term in $\phi$. Then $\phi(x,\bar{y})$ is equivalent to $\rho(x,\bar{y}) \wedge \eta(x,\bar{y})$, where $\rho$ is a $k$-intermediate formula and $\eta(x,\bar{y})$ is a quantifier-free equivalence formula.
\end{lemma}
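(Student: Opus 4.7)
The plan is to process $\phi$ conjunct by conjunct: non-equality atoms are already equivalence formulas (Definition~\ref{eqdef}) and go directly into $\eta$, while equality atoms get rewritten using the remark following Lemma~\ref{pairlem} into a $k$-intermediate piece for $\rho$ together with an $E$-relation for $\eta$.

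First I would classify each atomic conjunct of $\phi$. Since every function symbol of $L$ is unary, every $L$-term is of the form $s(v)$ for a single variable $v$ and some $s \in \{f,g\}^{<\omega}$, so each atomic conjunct containing $x$ has one of the shapes $s(x)=t(v)$, $s(x)\,E\,t(v)$, $C_{init+j}(s(x))$, or $C_{fin-j}(s(x))$, where $v$ is $x$ or some $y_i$. The last three do not use the equality symbol, hence are equivalence formulas, and I would add them to $\eta$ unchanged.

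For an equality conjunct $s(x) = t(v)$, I would apply the remark following Lemma~\ref{pairlem} with $k' := k - |s| \geq 1$ (available since $k$ strictly exceeds every string length in $\phi$) to obtain
\[
s(x) = t(v) \;\iff\; s(x)\,E\,t(v) \;\wedge\; \bigwedge_{u \in \{f,g\}^{k-|s|}} (us)(x) = (ut)(v).
\]
The $E$-conjunct is an equivalence formula and joins $\eta$; each new equality has $|us|=k$ on the $x$-side, so when $v=y_i$ (any length of $ut$ being allowed) or when $v=x$ and $|s|=|t|$ (forcing $|ut|=k$), the conjunct is already $k$-intermediate and joins $\rho$.

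The main obstacle is the residual case $v=x$ with $|s| \neq |t|$, WLOG $|s| < |t|$, since then $(ut)(x)$ is too long to be legal in a $k$-intermediate formula. For this case I would use the alternative rewrite
\[
s(x) = t(x) \;\iff\; s(x)\,E\,t(x) \;\wedge\; (f^{k-|s|}s)(x) = (f^{k-|t|}t)(x).
\]
The $(\Rightarrow)$ direction is immediate by functoriality. For $(\Leftarrow)$, I would use the structural observation that in any model of $T$ the only $E$-class fixed by $f^m$ for some $m \geq 1$ is $C_{fin}$ --- a consequence of axiom schema (g) together with axiom (c) and the monotone progression of classes under $f$ --- so the hypothesis $s(x)\,E\,t(x)$ with $|s|\neq|t|$ forces the common class $f^{|s|}([x]_E) = f^{|t|}([x]_E)$ to be $C_{fin}$. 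By axiom (c), $f$ is the identity on $C_{fin}$, whence $(f^{k-|s|}s)(x) = s(x)$ and $(f^{k-|t|}t)(x) = t(x)$, and the length-$k$ equality collapses back to $s(x)=t(x)$. Both sides of the new equality now have length $k$ applied to $x$, so it is $k$-intermediate and joins $\rho$, while the $E$-conjunct joins $\eta$. Taking $\rho$ and $\eta$ to be the conjunctions of all pieces produced across all atoms yields $\phi \equiv \rho \wedge \eta$.
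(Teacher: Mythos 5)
Your proof is correct and follows essentially the same route as the paper: reduce to a single atomic conjunct, classify by shape, and use the remark after Lemma~\ref{pairlem} to re-express equality atoms at level $k$. The only deviation is in the $s(x)=t(x)$, $|s|\neq|t|$ case, where the paper attaches the side condition $C_{fin}(t(x))$ while you attach $s(x)\,E\,t(x)$; the two are equivalent in $T$ once one notes (as you do) that an $E$-class fixed by a positive power of $S$ must be $C_{fin}$, so this is a cosmetic rather than substantive difference.
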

\begin{proof}
We note that a conjunction of $k$-intermediate formulas is again a $k$-intermediate formula, and a conjunction of equivalence formulas is clearly another equivalence formula. Therefore it suffices to prove the lemma for a single atomic formula $\phi(x,y)$.

If $\phi(x,y)$ has no equality symbol in it, we are done (our $k$-intermediate formula may be ``$f^k(x) = f^k(x)$".

If $\phi(x,y)$ is $s(x) = t(y)$, let $k > \max\{ |s|, |t|\}$. Then by Lemma \ref{pairlem}, $\phi(x,y)$ is equivalent to \[s(x) E t(y) \wedge \bigwedge_{u \in \{f,g\}^{k - |s|}} us(x) = ut(y).\]

Suppose $\phi(x,y)$ is $s(x) = t(x)$, and again let $k > \max\{|s|,|t|\}$. If $|s| = |t|$ then as in the case of $s(x) = t(y)$, the formula $s(x) = t(x)$ is equivalent to \[\bigwedge_{u \in \{f,g\}^{k - |s|}} us(x) = ut(x)\] (the missing conjunct ``$s(x) E t(x)$" is always true). If $|s| > |t|$, let $u_1$ be any string in $\{f,g\}^{k-|s|}$ and let $u_2$ be any string in $\{f,g\}^{k - |t|}$. Then ``$s(x) = t(x)$" is equivalent to ``$[u_1s(x) = u_2t(x)] \wedge [t(x) \in C_{fin}]$". To see this, we note that if $s(x) = t(x)$ then we must have $t(x) \in C_{fin}$, for if $s'$ is the length-$|t|$ final segment of $s$ then $t(x) E s'(x)$, and if $s'(x)$ is not an element of the final class $C_{fin}$ then $s(x)$ will be in a different equivalence class from $s'(x)$. Then we have $u_1s(x) = s(x)$ and $u_2t(x) = t(x)$, since our unary functions $f,g$ are the identity on $C_{fin}$. On the other hand, if $t(x) \in C_{fin}$ and $u_1s(x) = u_2t(x)$ then $s(x) \in C_{fin}$, and so $s(x) = u_1s(x) = u_2t(x) = t(x)$.
\end{proof}

\begin{lemma}
\label{atomic}
Let $\phi(x,\bar{y})$ be a conjunction of atomic formulas, with at least one conjunct of the form $s(x) = t(y_i)$ or $s(x) E t(y_i)$. Then $\phi$ has q.f.-definable polynomial cardinality in ``$z E t'(y_i)$" over $\mathbb{Z}$, for some $t' \in \{f,g\}^{<\omega}$.
\end{lemma}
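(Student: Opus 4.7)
The plan is to decompose $\phi$ via Lemma \ref{intermedform} into an intermediate part and an equivalence part, apply previous polynomial-cardinality lemmas to each, and recombine. First I would note that conjuncts of $\phi$ not containing $x$ merely constrain $\bar{y}$ and can be absorbed into the quantifier-free formulas $\pi_i$ witnessing polynomial cardinality; so assume every conjunct contains $x$. Choose $k$ strictly larger than any $\{f,g\}$-string length appearing in a term of $\phi$, and apply Lemma \ref{intermedform} to rewrite $\phi(x,\bar{y}) \equiv \rho(x,\bar{y}) \wedge \eta(x,\bar{y})$ with $\rho$ $k$-intermediate and $\eta$ a quantifier-free equivalence formula.

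Next I would split based on which witnessing conjunct we use. If $\phi$ contains some $s(x) = t(y_i)$, the construction in Lemma \ref{intermedform} places the conjuncts $us(x) = ut(y_i)$ (for each $u \in \{f,g\}^{k-|s|}$) into $\rho$, so Lemma \ref{intermediate} applies directly, yielding q.f.-definable polynomial cardinality for $\rho$ in ``$z E ut(y_i)$'' together with the localization $\rho(M,\bar{b}) \subseteq f^{-k}([ut(b_i)]_E)$. Otherwise the witnessing conjunct is only of the form $s(x) E t(y_i)$; this lies entirely in $\eta$, and $\rho$ then contains no occurrences of $\bar{y}$, so $\rho(M,\bar{b}) = \rho(M)$ is a fixed $\emptyset$-definable set. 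In this second case I would analyze $\rho(M) \cap C$ for each equivalence class $C$ as in Lemma \ref{basic}: using the bijection $C \to f^k(C)^{2^k}$ of Lemma \ref{pairlem}, the equalities defining $\rho$ translate into coordinate identifications, giving $|\rho(M) \cap C| = |f^k(C)|^q$ for some $q$ determined by the combinatorics of $\rho$.

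To combine $\rho$ with $\eta$: since $\eta(M,\bar{b})$ is a union of $E$-classes, we have $\phi(M,\bar{b}) = \bigsqcup_{C \subseteq \eta(M,\bar{b})} (\rho(M,\bar{b}) \cap C)$, with $C$ ranging over a finite q.f.-definable collection (namely $f^{-k}([ut(b_i)]_E)$ in the first case and $f^{-|s|}([t(b_i)]_E)$ in the second). The condition ``$C \subseteq \eta(M,\bar{b})$'' is equivalent, by $E$-saturation, to ``$\exists z (z \in C \wedge \eta(z,\bar{b}))$'', a quantified equivalence formula that is q.f.-equivalent in $\bar{y}$ by Lemma \ref{qelight}. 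Setting $t' = ut$ in the first case and $t' = f^{k-|s|}t$ in the second ensures $|f^k(C)| = |[t'(b_i)]_E|$ for every candidate $C$, so each $|\rho(M,\bar{b}) \cap C|$ is a monomial with integer coefficient in $|[t'(b_i)]_E|$, and summing gated by the q.f. conditions yields the required polynomial cardinality.

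The main obstacle will be the $C_{fin}$ degeneracy: when $[t'(b_i)]_E = C_{fin}$, multiple classes $C_{fin}, C_{fin-1}, \ldots, C_{fin-|s|}$ simultaneously enter the candidate set and Lemma \ref{pairlem}'s parametrization produces different exponents on each via the $\sim$-equivalences on final segments of strings. Using $|C_{fin-j}| = |C_{fin}|^{2^j}$ together with the bookkeeping already present in Lemma \ref{basicplus}, each contribution remains polynomial in $|C_{fin}| = |[t'(b_i)]_E|$, so the overall sum stays polynomial; but one must verify carefully that the witnessing $\pi_i$'s separating these subcases remain quantifier-free, which reduces to checking that the $\sim$-compatibility conditions on $\bar{b}$ are already quantifier-free in the language $\{f,g,=\}$.
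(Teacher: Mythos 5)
Your proposal follows the same overall route as the paper: split $\phi$ via Lemma \ref{intermedform} into a $k$-intermediate part and a quantifier-free equivalence part, control the intermediate part with the polynomial-cardinality machinery of Lemmas \ref{basic}--\ref{intermediate}, handle the equivalence part's class-by-class contribution with Lemma \ref{qelight}, and watch out for the $C_{fin}$ degeneracy. The essential ideas, the choice of $t'$, and the use of pseudofinite inclusion/summation over the finitely many candidate classes all match the paper's sketch.

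Where you actually improve on the paper is the explicit case split. The paper's proof only discusses what happens ``if $\rho(x,\bar{y})$ contains a conjunct of the form $s(x) = t(y_i)$'', i.e.\ the situation in which Lemma \ref{intermediate} applies directly; it does not say what to do when the only witnessing conjunct is of the form $s(x) \mathbin{E} t(y_i)$, in which case the $k$-intermediate formula produced by Lemma \ref{intermedform} has no $\bar{y}$-variables at all and Lemma \ref{intermediate} (as stated) is not applicable. Your second case supplies that missing argument correctly: since $\rho$ is then $\bar{y}$-free, $\phi(M,\bar{b}) = \rho(M)\cap\eta(M,\bar{b})$, the equivalence part localizes to the at most $|s|+1$ classes inside $f^{-|s|}([t(b_i)]_E)$, each contribution $|\rho(M)\cap C|$ is a power of $|f^k(C)| = |[t'(b_i)]_E|$, and the gating conditions are eliminable via Lemma \ref{qelight}. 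This is the right extra step, and your observation that $y$-free conjuncts can be absorbed into the $\pi_i$'s is also a necessary hygiene step the paper leaves unstated. Overall the proposal is sound and, if anything, slightly more careful than the published sketch.
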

\begin{proof}
Let us write $\phi(x,\bar{y})$ as $\rho(x,\bar{y}) \wedge \xi(x,\bar{y})$, where $\rho$ is in the language $\{f,g,=\}$ and $\xi$ is an equivalence formula. By Lemma \ref{intermedform}, there is a $k$ so that $\rho(x,\bar{y})$ is equivalent to the conjunction of a $k$-intermediate formula $\varphi(x,\bar{y})$ and another equivalence formula $\xi'(x,\bar{y})$. 

Inspecting the proof of that lemma, we see that if $\rho(x,\bar{y})$ contains a conjunct of the form $s(x) = t(y_i)$, then $\varphi$ contains a conjunct of the form $s'(x) = t'(y_i)$ for some strings $s', t'$ which are initial extensions of $s$ and $t$. Then by Lemma \ref{intermediate}, the formula $\varphi(x,\bar{y})$ has q.f.-definable polynomial cardinality in the formula ``$z E t'(y_i)$" over $\mathbb{Z}$, as witnessed by polynomials $F_1(X),\ldots,F_r(X)$ and quantifier-free formulas $\pi_1(\bar{y}),\ldots,\pi_r(\bar{y})$. Furthermore, there are quantifier-free formulas $\eta_1(\bar{y}),\ldots,\eta_p(\bar{y})$ so that ``$\bigvee_j \eta_j(\bar{y})$" is equivalent to \[\exists x \varphi(x,\bar{y}) \wedge [t'(y_i) \in C_{fin}]\] and if $M \models \eta_j(\bar{b})$ then solutions to $\varphi(x,\bar{b})$ lie exactly in some finite set of final classes $C_{fin-l}$. In sum, letting $\eta_0(\bar{y})$ be the formula ``$t'(y_i) \notin C_{fin}$", we obtain formulas $\sigma_0(w,y_i),\sigma_1(w),\ldots,\sigma_p(w)$ where $\sigma_0(w)$ is the formula ``$s'(w) E t'(y_i)$" and $\sigma_j(w)$ is a disjunction of formulas $C_{fin-l}(w)$ for $j > 0$ so that if $M \models \exists x \varphi(x,\bar{b})$ then $M \models \bigvee_{j=0}^p \eta_j(\bar{b})$, and if $M \models \eta_j(\bar{b})$ then $\varphi(M)$ intersects $[d]_E$ exactly when $M \models \sigma_j(d)$.

Now we can incorporate the equivalence formula $\xi'(x,\bar{y}) \wedge \xi(x,\bar{y})$. The precise details here are particularly tedious, so we sketch the argument. To find the cardinality of $\phi(M,\bar{b})$, we look at the set $\varphi(M,\bar{b})$. The formulas $\pi_j(\bar{y})$ define the cardinality of this set, and the formulas $\eta_j(\bar{y})$ determine which equivalence classes this set intersects. If $M \models \eta_j(\bar{y})$, then there is a solution to $\varphi(x,\bar{b}) \wedge \xi'(x,\bar{b}) \wedge \xi(x,\bar{b})$ precisely if there is a solution to the equivalence formula $\sigma_j(w,\bar{b}) \wedge \xi(w,\bar{b}) \wedge \xi'(w,\bar{b})$. By Lemma \ref{qelight}, this is equivalent to a quantifier-free condition $\tau(\bar{y})$ on $\bar{b}$.

If the solution set is nonempty, then the cardinality is exactly the cardinality of $\varphi(M,\bar{b})$ if this set is contained in a single equivalence class (as in the case where $t'(b_i) \notin C_{fin}$), or is otherwise some smaller cardinality which is still polynomial in $C_{fin} = [t'(b_i)]_E$, with quantifier-free defining formulas given by a conjunction of a $\pi_j(\bar{y})$, an $\eta_{j'}(\bar{y})$, and the quantifier-free formula $\tau(\bar{y})$.  
\end{proof}

\begin{lemma}
\label{literals}
Let $\phi(x,\bar{y})$ be a conjunction of literals, with at least one conjunct of the form $s(x) = t(y_i)$ or $s(x) E t(y_i)$. Then $\phi$ has q.f.-definable polynomial cardinality in ``$z E t(y_i)$" over $\mathbb{Z}$.
\end{lemma}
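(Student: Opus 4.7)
The plan is to reduce the lemma to Lemma~\ref{atomic} by inclusion--exclusion over the negated conjuncts. Write
\[\phi(x,\bar{y}) \;=\; \alpha(x,\bar{y}) \;\wedge\; \bigwedge_{j=1}^n \neg\beta_j(x,\bar{y}),\]
where $\alpha$ is the conjunction of the positive atomic literals (which by hypothesis contains a cross-sorted conjunct of the form $s(x)=t(y_i)$ or $s(x)\,E\,t(y_i)$) and each $\beta_j$ is atomic. For each subset $S \subseteq \{1,\ldots,n\}$, set $\phi_S := \alpha \wedge \bigwedge_{j \in S}\beta_j$. Since $\phi_S$ is a conjunction of atomic formulas extending $\alpha$, it still contains the required cross-sorted conjunct, so Lemma~\ref{atomic} applies to it.

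To combine the polynomial cardinalities of the various $\phi_S$, I first need them to live over a common base. Inspecting the proof of Lemma~\ref{atomic} (via Lemma~\ref{intermedform}), the extension $t' = ut$ witnessing polynomial cardinality depends only on a chosen $k$ bounding the string lengths appearing in $\phi_S$. Fixing a single $k$ larger than every string length appearing anywhere in $\phi$ allows Lemma~\ref{intermedform} to be applied uniformly, yielding one common extension term $t'(y_i)$ and one common hyperreal $X := |[t'(b_i)]_E|$ such that each $\phi_S$ has q.f.-definable polynomial cardinality in ``$z\,E\,t'(y_i)$'' witnessed by polynomials $F_{S,1},\ldots,F_{S,r_S} \in \mathbb{Z}[X]$ and quantifier-free formulas $\pi_{S,1}(\bar{y}),\ldots,\pi_{S,r_S}(\bar{y})$.

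Inclusion--exclusion then gives
\[|\phi(M,\bar{b})| \;=\; \sum_{S \subseteq \{1,\ldots,n\}} (-1)^{|S|}\,|\phi_S(M,\bar{b})|.\]
For each choice function assigning to every $S$ either a case index $k_S \in \{1,\ldots,r_S\}$ or the ``empty case'' encoded by $\pi_{S,0} := \bigwedge_{k \geq 1}\neg\pi_{S,k}$ with $F_{S,0} := 0$, the quantifier-free formula $\bigwedge_S \pi_{S,k_S}(\bar{y})$ cuts out a region on which $|\phi(M,\bar{b})| = \sum_S (-1)^{|S|} F_{S,k_S}(X)$, which lies in $\mathbb{Z}[X]$. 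These regions partition the parameter space as the choice function varies. Discarding those choice functions whose resulting polynomial is identically zero (on these regions $\phi(M,\bar{b})$ has cardinality $0$, hence is empty, since the axiom scheme forcing equivalence classes to be infinite ensures $X$ is an infinite nonstandard integer and therefore not a root of any nonzero polynomial with standard integer coefficients) and collapsing duplicate polynomials by disjoining their defining formulas yields the desired witnesses, hence q.f.-definable polynomial cardinality of $\phi$ in ``$z\,E\,t'(y_i)$'' over $\mathbb{Z}$ (matching the phrasing of Lemma~\ref{atomic}).

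The main obstacle is combinatorial bookkeeping: choosing $k$ uniformly large so that a single base hyperreal $X$ serves all $\phi_S$, and then verifying that intersecting the exponentially many case families $\{\pi_{S,k_S}\}_S$ still produces quantifier-free definitions and polynomials that meet the nonzero requirement of the definition. The polynomial arithmetic itself is routine, and the infinite-class axiom handles the zero-polynomial issue cleanly.
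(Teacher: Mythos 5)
Your argument is essentially the same as the paper's: decompose $\phi$ into $\alpha \wedge \bigwedge_j \neg\beta_j$, apply inclusion--exclusion to reduce to the purely positive conjunctions $\phi_S = \alpha \wedge \bigwedge_{j\in S}\beta_j$, and then invoke Lemma~\ref{atomic} for each $\phi_S$. The paper writes this as $|\rho| - \bigl|\bigvee_i(\rho\wedge\eta_i)\bigr|$ and then expands the union, which is the same alternating sum you write as $\sum_S(-1)^{|S|}|\phi_S|$. Where you go beyond the paper's very terse ``It follows that \ldots'' is in supplying the bookkeeping: fixing a single $k$ so that Lemma~\ref{intermedform} is applied uniformly and every $\phi_S$ reports its cardinality over the same extension $t'$, partitioning the parameter space by choice functions $\bigwedge_S\pi_{S,k_S}$, and discarding regions where the combined polynomial is identically zero so that the nonzero-polynomial requirement of the definition is met. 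Those details are genuinely needed and the paper glosses over them, so your write-up is a strictly cleaner version of the same proof. Two small remarks: (1) it is worth saying explicitly that the reason a single $t'$ serves all $\phi_S$ is that every $\phi_S$ is based over the \emph{same} conjunct $s(x)=t(y_i)$ (or $s(x)\,E\,t(y_i)$) coming from $\alpha$, so $t'=ut$ is determined once $k$ and that conjunct are fixed; and (2) note that, as you observe, this proves polynomial cardinality in $z\,E\,t'(y_i)$ rather than $z\,E\,t(y_i)$ as the lemma is literally stated --- the same small mismatch already appears between Lemma~\ref{atomic} and Lemma~\ref{literals} in the paper, and is harmless downstream since Lemma~\ref{standard} only needs that the base set be infinite.
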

\begin{proof}

Let $\phi(x,\bar{y})$ be $\rho(x,\bar{y}) \wedge \bigwedge_i \neg \eta_i(x,\bar{y})$, where $\rho$ is a conjunction of atomic formulas, one of which is ``$s(x) = t(y_i)$" or ``$s(x) E t(y_i)$", and each $\eta_i$ is atomic (with the possibility that there is no conjunct of this form). Then $\phi$ is equivalent to $\rho(x,\bar{y}) \wedge \neg \bigvee_i \eta_i(x,\bar{y})$. Then for any $\bar{b}$, we have $|\phi(M,\bar{b})| = |\rho(M,\bar{b})| - |\rho(M,\bar{b}) \wedge \bigvee_i \eta_i(M,\bar{b})| = |\rho(M,\bar{b})| - |\bigvee_i [\rho(M,\bar{b}) \wedge \eta_i(M,\bar{b})]|$.

Now we appeal to the inclusion-exclusion principle from basic combinatorics. Given finite subsets $A,B$ of some set, the cardinality $|A \cup B|$ is $|A| + |B| - |A \cap B|$. More generally, given finite sets $A_1, \ldots, A_n$, the set $A_1 \cup \ldots \cup A_n$ has cardinality \[\sum_{k=1}^n (-1)^{k+1}(\sum_{1 \leq i_1 < \ldots < i_k \leq n} |A_{i_1} \cap \ldots \cap A_{i_k}|).\]

Therefore the cardinality $|\bigvee_i [\rho(M,\bar{b}) \wedge \eta_i(M,\bar{b})]|$ is a sum/difference of cardinalities of the form $|\rho(M,\bar{b}) \wedge \eta_{i_1}(M,\bar{b}) \wedge \ldots \wedge \eta_{i_k}(M,\bar{b})|$. By Lemma \ref{atomic}, each of these formulas has q.f.-definable polynomial cardinality in ``$z E t(y_i)$" over $\mathbb{Z}$. It follows that $\bigvee_i [\rho(M,\bar{b}) \wedge \eta_i(M,\bar{b})]$ and therefore $\phi(x,\bar{y})$ do as well.
\end{proof}

The final ingredient in our quantifier elimination proof is an observation about $\emptyset$-definable sets.

\begin{lemma}
\label{emptydef}
Let $\phi(x)$ be a quantifier-free formula in the single variable $x$. Then there is a quantifier-free formula $\theta(w)$ in the unary relational language $\{C_{init}, C_{init+1}, \ldots\} \cup \{C_{fin},C_{fin-1},\ldots\}$ so that for each equivalence class $[d]_E$, the set $\phi(M)$ intersects $[d]_E$ if and only if $M \models \theta(d)$.
\end{lemma}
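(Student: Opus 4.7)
The plan is the standard quantifier-elimination reduction. Put $\phi(x)$ in disjunctive normal form $\phi = \bigvee_j \phi_j$ with each $\phi_j$ a conjunction of literals, and note that $\phi(M) \cap [d]_E$ is nonempty iff some $\phi_j(M) \cap [d]_E$ is, so a disjunction of the desired unary-predicate formulas will suffice. It thus suffices to prove the lemma for a single conjunction of literals, which I henceforth call $\phi$. Split $\phi$ into its \emph{equivalence part} $\phi^{eq}$ (literals involving only $E$, $C_{init+k}$, and $C_{fin-k}$) and its \emph{equality part} $\phi^{=}$ (literals of the form $s(x) = t(x)$ or $\neg s(x) = t(x)$).

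The formula $\phi^{eq}$ is an equivalence formula in the sense of Definition \ref{eqdef}, so its truth depends only on $[x]_E$. By Lemma \ref{bi-interp} it corresponds to an $L^\star$-formula in one variable which, using the quantifier elimination of $T^\star$, reduces to a Boolean combination of atomic formulas $S^k(v) = S^l(v)$, $S^k(v) = S^l(c_{init})$, and $S^k(v) = S^l(c_{fin})$. Direct inspection shows each such atomic formula corresponds in $M$ to a finite Boolean combination of the unary predicates $C_{init+j}$ and $C_{fin-j}$ --- for example, $S^k(v) = S^l(v)$ with $k < l$ holds iff $v \in \bigcup_{j \leq k} C_{fin-j}$, and $S^k(v) = S^l(c_{init})$ with $l \geq k$ holds iff $v \in C_{init+(l-k)}$ (and is false otherwise) --- so the condition ``$[d]_E$ satisfies $\phi^{eq}$'' is captured by a quantifier-free formula $\theta^{eq}(w)$ in the required language.

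For the equality part, fix an equivalence class $C$ and ask whether some $a \in C$ satisfies $\phi^{=}(a)$. Let $n$ exceed every string length appearing in $\phi^{=}$. Applying Lemma \ref{intermedform} to each literal, each equality can be rewritten (modulo further equivalence conditions, which we absorb into $\phi^{eq}$) as a Boolean combination of length-$n$ equalities $s'(x) = t'(x)$; after redistributing via DNF if needed, this reduces the problem to a conjunction of length-$n$ equalities and inequalities. For any class $C \notin \{C_{fin}, C_{fin-1}, \ldots, C_{fin-n}\}$, Lemma \ref{pairlem} gives a bijection $C \to (S^n(C))^{2^n}$ onto an infinite set via $a \mapsto (u(a))_{u \in \{f,g\}^n}$; the problem of choosing coordinates in an infinite set subject to an equality/inequality system reduces to pure combinatorial consistency, which depends only on the system and not on $C$ itself. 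For each of the finitely many exceptional classes $C_{fin-k}$ with $k \leq n$, satisfiability is determined directly by the second case of Lemma \ref{pairlem}, and each such class is picked out by the single unary predicate $C_{fin-k}(w)$.

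Combining these analyses, ``$\phi(M) \cap [d]_E \neq \emptyset$'' is a Boolean combination of the predicates $C_{init+k}(d)$ and $C_{fin-k}(d)$, giving the required formula $\theta(w)$. The main technical nuisance is the case analysis near $C_{fin}$: the pairing map of Lemma \ref{pairlem} fails to be a clean bijection there, and within each exceptional class one must account for the compatibility constraints (agreement on suffixes) among the $\{f,g\}^n$-coordinates. However, only finitely many classes are exceptional, each is individually defined by a predicate already in our language, and the coordinate-level constraints are just finite combinatorics on a finite alphabet --- so this does not obstruct the argument.
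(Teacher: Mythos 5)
Your proof is correct and matches the paper's approach: reduce to a conjunction of literals, split into an equivalence part and an $\{f,g,=\}$ part, push all equalities to a common length via Lemma~\ref{intermedform} (absorbing the newly produced $C_{fin}$ membership conditions into the equivalence part), and settle class-by-class satisfiability of the equality part using Lemma~\ref{pairlem}, with $C_{fin},\ldots,C_{fin-k}$ treated as the finitely many exceptional classes. The only cosmetic divergence is that where the paper translates the one-variable equivalence literals into Boolean combinations of $C_{init+j},C_{fin-j}$ by direct inspection of the atomic forms $s(x)\,E\,t(x)$, you route this step through Lemma~\ref{bi-interp} and the quantifier elimination of $T^\star$, which yields the same result.
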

\begin{proof}
Atomic formulas in the single variable $x$ come in the forms ``$C_{init+i}(x)$", ``$C_{fin-i}(x)$", ``$s(x) E t(x)$", and ``$s(x) = t(x)$".

When $s$ and $t$ are strings of the same length, the formula $s(x) E t(x)$ is always true. When $|s| > |t|$, $s(x) E t(x)$ happens if and only if $[s(x)]_E = [t(x)]_E = C_{fin}$, which is true if and only if $x \in \bigcup_{i \leq |t|} C_{fin-i}(x)$. So any Boolean combination of single-variable formulas in $x$ without the equality symbol is equivalent to a formula in the language $\{C_{init+i} : i \in \omega\} \cup \{C_{fin-i} : i \in \omega\}$. 

Suppose $k > \max( |s|, |t|)$. Then by Lemma 4.17, ``$s(x) = t(x)$" is equivalent to $\rho(x) \wedge \eta(x)$, where $\rho$ is $k$-intermediate and $\eta$ is a quantifier-free equivalence formula. By definition, $\rho$ is a conjunction of the form $\bigwedge_i s_i(x) = t_i(x)$, where $s_i, t_i \in \{f,g\}^k$ for each $i$.

It follows that there exists a $k$ such that $\phi(x)$ is equivalent to a Boolean combination of formulas of the form ``$C_{init+i}(x)$", ``$C_{fin-i}(x)$" and ``$s(x)  = t(x)$", where all $s,t \in \{f,g\}^k$. The statement of the lemma passes up through disjunctions, so we may assume $\phi(x)$ is a conjunction of such formulas and their negations. Let us write $\phi(x)$ as $\rho(x) \wedge \eta(x)$, where $\rho$ is in the language $\{f,g,=\}$ and $\eta$ is in the language $\{C_{init+i}(x) : i \in \omega\} \cup \{C_{fin-i}(x) : i \in \omega\}$. Without loss of generality, $\rho(x)$ is $\bigwedge_{s \sim t} s(x) = t(x) \wedge \bigwedge_j s'_j(x) \neq t'_j(x)$ where $\sim$ is an equivalence relation on $\{f,g\}^k$ and all $s'_j,t'_j$ are strings in $\{f,g\}^k$. Then $\rho(M)$ is nonempty if and only if $s'_j \not \sim t'_j$ for all $j$. If $\phi(M)$ is  nonempty and $C$ is not $C_{fin-i}$ for any $i < k$ then $\rho(C)$ is nonempty, and for $i < k$ the set $\rho(C_{fin-i})$ is nonempty if and only if $s'_j$ and $t'_j$ do not have the same length-$i$ final string for each $j$. In all, there is a quantifier-free formula $\pi(w)$ in the language $\{C_{fin-i}(w) : i \in \omega\}$ such that for all $d \in M$, $\rho([d]_E)$ is nonempty if and only if $M \models \pi(d)$. Then $\phi([d]_E)$ is nonempty if and only if $M \models \pi(d) \wedge \eta(d)$. This proves the lemma, with $\theta(w) = \pi(w) \wedge \eta(w)$.

\end{proof}

We now finish our proof of quantifier elimination.

\begin{proposition}
\label{qe}
$T$ has quantifier elimination in the expanded language \[\{f,g,E\} \cup \{C_{init}, C_{init+1}, C_{init+2}, \ldots\}.\]
\end{proposition}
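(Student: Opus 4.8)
The plan is to verify the standard primitive-formula criterion for quantifier elimination: it suffices to show that whenever $\Phi(x,\bar{y})$ is a conjunction of literals, the formula $\exists x\,\Phi(x,\bar{y})$ is $T$-equivalent to a quantifier-free formula in the expanded language. A literal not containing $x$ commutes past the quantifier, so we may assume every conjunct of $\Phi$ mentions $x$; since $f$ and $g$ are unary, each such conjunct is then (a negation of) one of $s(x)=t(y_i)$, $s(x)\,E\,t(y_i)$, $s(x)=t(x)$, $s(x)\,E\,t(x)$, $C_{init+j}(s(x))$, $C_{fin-j}(s(x))$, with $s,t\in\{f,g\}^{<\omega}$. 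I would split on whether $\Phi$ has a \emph{positive} conjunct of one of the first two forms, i.e. a conjunct ``linking $x$ to a parameter''.

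If $\Phi$ has such a conjunct, say $s(x)=t(y_i)$ or $s(x)\,E\,t(y_i)$, then Lemma~\ref{literals} gives that $\Phi$ has quantifier-free definable polynomial cardinality in the formula ``$z\,E\,t(y_i)$'' over $\mathbb{Z}$, witnessed by nonzero polynomials $F_1,\dots,F_r$ and quantifier-free formulas $\pi_1,\dots,\pi_r$. The decisive point is that in \emph{every} model of $T$ each $E$-class is infinite (axiom~(i)), so ``$z\,E\,t(\bar{b}_i)$'' always defines an infinite set, of infinite hyperinteger cardinality; hence the extra hypothesis of Lemma~\ref{standard} holds automatically, and Lemma~\ref{standard} yields $\exists x\,\Phi(x,\bar{y})\equiv\bigvee_{i}\pi_i(\bar{y})$, which is quantifier-free. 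This is exactly where the axiom forcing all classes to be infinite earns its keep, and where the ``polynomial cardinality'' development of the section (Lemmas~\ref{basic}--\ref{literals}) pays off.

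In the remaining case $\Phi$ has no such conjunct, so all positive conjuncts of $\Phi$ are purely in $x$. Gather every conjunct of $\Phi$ mentioning no $y$-variable into a single quantifier-free formula $P(x)$, so that $\Phi(x,\bar{y})\equiv P(x)\wedge\bigwedge_{l}\neg\eta_l(x,\bar{y})$, where each remaining $\eta_l$ is atomic with at least one $y$-variable and hence has the form $s_l(x)=t_l(y_{i_l})$ or $s_l(x)\,E\,t_l(y_{i_l})$. A conjunct $\neg(s_l(x)=t_l(y_{i_l}))$ only removes the finite set $s_l^{-1}\{t_l(\bar{b}_{i_l})\}$ from the solution set, whereas $\neg(s_l(x)\,E\,t_l(y_{i_l}))$ removes a union of entire $E$-classes (those $C$ with $S^{|s_l|}(C)=[t_l(\bar{b}_{i_l})]_E$ in $M/E$). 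The key remark is that in a model of $T$ any $\emptyset$-definable subset of a single $E$-class is empty or infinite: inside a fixed class $P(x)$ is a Boolean combination of conditions ``$s(x)=t(x)$'', and by the bijection analysis of Lemmas~\ref{basicplus} and~\ref{emptydef} a nonempty such set is in definable bijection with a positive power of the (infinite) class. Therefore deleting finitely many points never empties a nonempty solution set, so $\exists x\,\Phi(x,\bar{b})$ holds if and only if there is an $E$-class meeting $P(M)$ that avoids all the forbidden classes; equivalently, $\exists x\,\Phi(x,\bar{y})$ is $T$-equivalent to $\exists x\,\bigl(\theta_P(x)\wedge\bigwedge_{l\ \mathrm{of}\ E\text{-type}}\neg(s_l(x)\,E\,t_l(y_{i_l}))\bigr)$, where $\theta_P$ is the quantifier-free formula in the class-predicate language furnished by Lemma~\ref{emptydef} that expresses ``$[x]_E$ meets $P(M)$''. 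This last formula is a quantifier-free equivalence formula, so Lemma~\ref{qelight} finishes the job by rewriting $\exists x(\cdots)$ as a quantifier-free formula in $\bar{y}$ in the expanded language.

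I expect the main obstacle to be this second case: marshalling the negated parameter-conjuncts into the two clean types (finite deletions versus whole-class deletions) and, in particular, establishing the claim that $\emptyset$-definable subsets of an $E$-class are empty or infinite, so that the finite deletions are provably irrelevant. That claim is what lets the whole problem collapse onto the level of $M/E$ and its reduct $M^\star\models T^\star$, where the known quantifier elimination of $T^\star$ (packaged through Lemmas~\ref{bi-interp} and~\ref{qelight}) does the rest. The first case, by contrast, is essentially immediate once Lemmas~\ref{literals} and~\ref{standard} are in hand.
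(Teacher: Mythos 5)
Your overall architecture is exactly the paper's: the same reduction to primitive formulas, the same case split on whether $\Phi$ contains a positive conjunct $s(x)=t(y_i)$ or $s(x)\,E\,t(y_i)$, with Lemmas \ref{literals} and \ref{standard} (plus the axiom that classes are infinite, so that ``$z\,E\,t'(y_i)$'' defines an infinite set) disposing of the first case, and Lemma \ref{emptydef} followed by Lemma \ref{qelight} disposing of the second after the negated equality conjuncts are discarded. The first case is fine as you present it.

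In the second case, however, your stated reason for discarding the conjuncts $\neg(s_l(x)=t_l(y_{i_l}))$ is false: when $s_l$ is a nonempty string, the fiber $s_l^{-1}\{t_l(\bar{b}_{i_l})\}$ is not finite. By axiom (f), for $c$ in a class $C\neq C_{init}$ the set $f^{-1}(c)$ is in bijection with $C$ (the value $g(z)$ may be chosen freely), hence infinite, and more generally $s^{-1}\{c\}$ has cardinality a positive power of $|C|$. So the argument ``infinite solution set minus finitely many points is nonempty'' does not apply, and this is precisely the point the proof has to address (it is the step the paper compresses into ``we may choose a witness for $x$ in $\psi(x)\wedge\varphi(x,\bar{y})$ so that $s_i(x)\neq t_i(y_{j_i})$ for each $i$''). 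The fact you need is true but requires the coordinatization of Lemma \ref{pairlem}: inside a class $C'$ the solution set of $P$ is identified with tuples over the infinite class $C=f^k(C')$ indexed by the free $\sim$-classes of level-$k$ strings; a constraint $s_l(x)=t_l(\bar{b}_{i_l})$ forces prescribed values on the coordinates $us_l$, $u\in\{f,g\}^{k-|s_l|}$, so it meets the solution set in a set that fixes at least one free coordinate (or is a single point when $s_l$ is empty). Since each free coordinate ranges over an infinite set, one can choose a witness violating all of finitely many such constraints (equivalently, finitely many sets of strictly smaller polynomial degree in $|C|$ cannot cover the solution set). With that repair, and your correct handling of the $\neg(s(x)\,E\,t(y_i))$ conjuncts as equivalence formulas fed into Lemma \ref{qelight}, your proof coincides with the paper's.
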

\begin{proof}
We recall that we have been working in the expanded language \[\{f,g,E\} \cup \{C_{init}, C_{init+1},\ldots\} \cup \{C_{fin}, C_{fin-1},\ldots\}\] since the remarks after Proposition \ref{stable}. The predicates $C_{fin-k}$ are all quantifier-free definable in $\{f,g,E\}$; for example, ``$C_{fin}(x)$" is equivalent to ``$f(x) = x$", and we may define the other final predicates recursively, as ``$C_{fin-k}(x)$" holds precisely when ``$C_{fin-(k-1)}(f(x)) \wedge \neg C_{fin-(k-1)}(x)$" holds.

It suffices to show that $\exists x \Phi(x,\bar{y})$ is equivalent to a quantiifer-free formula $\eta(\bar{y})$ whenever $\Phi(x,\bar{y})$ is a conjunction of literals. We may assume that the variable $x$ appears in each literal. 

If there is a literal in $\Phi$ of the form $s(x) = t(y_i)$ or $s(x) E t(y_i)$ then we apply Lemma \ref{literals} and Lemma \ref{standard} to obtain quantifer-free formulas $\pi_1(\bar{y}),\ldots,\pi_r(\bar{y})$ so that in $M$, the formula $\exists x \Phi(x,\bar{y})$ is equivalent to the quantifier-free formula $\bigvee_i \pi_i(\bar{y})$.

If not, then let us write $\Phi$ as $\psi(x) \wedge \varphi(x,\bar{y}) \wedge \bigwedge_i s_i(x) \neq t_i(y_{j_i})$, where $\psi(x)$ is a conjunction of literals in the variable $x$ and $\varphi(x,\bar{y})$ is an equivalence formula. By Lemma \ref{emptydef}, there is a quantifier-free formula $\eta(w)$ in the language $\{C_{init}(w), C_{init+1}(w), \ldots\} \cup \{C_{fin}(w), C_{fin-1}(w), \ldots\}$ so that for all $d \in M$, the set $\psi(M)$ intersects $[d]_E$ if and only if $M \models \eta(d)$. Therefore $M \models \exists x [\psi(x) \wedge \varphi(x,\bar{y})$ if and only if $M \models \exists w [\eta(w) \wedge \varphi(w,\bar{y})]$. If we have $M \models \exists x [\psi(x) \wedge \varphi(x,\bar{y})]$ then $M \models \exists x \Phi(x,\bar{y})$, since we may choose a witness for $x$ in $\psi(x) \wedge \varphi(x,\bar{y})$ so that $s_i(x) \neq t_i(x_{j_i})$ for each $i$. So $M \models \exists x \Phi(x,\bar{y})$ if and only if $M \models \exists w [\eta(w) \wedge \varphi(w,\bar{y})]$. Since $\eta \wedge \varphi$ is an equivalence formula, Lemma \ref{qelight} gives us a quantifier-free equivalence formula $\theta(\bar{y})$ so that $\exists w \eta(w,\bar{y}) \wedge \varphi(w,\bar{y})$ is equivalent to $\theta(\bar{y})$. Therefore $\exists x \Phi(x,\bar{y})$ is equivalent to $\theta(\bar{y})$.

\end{proof}

\end{document}